\providecommand{\U}[1]{\protect \rule{.1in}{.1in}}
\theoremstyle{change}
\newtheorem{definition}{Definition:}[section]
\newtheorem{proposition}[definition]{Proposition:}
\newtheorem{theorem}[definition]{Theorem:}
\newtheorem{lemma}[definition]{Lemma:}
	\newtheorem{remark}[definition]{Remark:}
	\newtheorem{example}[definition]{Example:}
\newenvironment{proof}
{{\bf Proof:}}
{\qquad \hspace*{\fill} $\Box$}
\newcommand{\inner}{\operatorname{int}}
\newcommand{\FC}{\mathcal{F}}
\newcommand{\VC}{\mathcal{V}}
\newcommand{\MC}{\mathcal{M}}
\newcommand{\RC}{\mathcal{R}}
\newcommand{\N}{\mathbb{N}}
\newcommand{\R}{\mathbb{R}}
\begin{document}

\title{Weak convergence of robust functions on topological groups}
	\author{V\'{\i}ctor Ayala \thanks{%
			Supported by Proyecto Fondecyt $n^{o}$ 1190142, Conicyt, Chile} \ and
		Heriberto Rom\'{a}n-Flores \thanks{Corresponding Author's e-mail: heriberto.roman@gmail.com} \\
		Universidad de Tarapac\'a\\
		Instituto de Alta Investigaci\'on\\
		Casilla 7D, Arica, Chile\\
		and\\
		Adriano Da Silva \thanks{%
			Supported by Fapesp grant n%
			${{}^o}$
			2018/10696-6.}\\
		Instituto de Matem\'atica,\\
		Universidade Estadual de Campinas\\
		Cx. Postal 6065, 13.081-970 Campinas-SP, Brasil.\\
	}
\date{\today }
\maketitle

\begin{abstract}
	In this paper we introduce a weak version of level and epigraph convergence for level functions on topological spaces. In the particular case of topological groups we are able to define convolutions in the set of level functions and show that any such function is the limit in level and epigraph of robust functions.
\end{abstract}

	\textbf{Key words:} Level functions, robust functions, topological groups
	
	\textbf{2010 Mathematics Subject Classification: 54A20, 26E25, 54H11}

\section{Introduction}

The study of level-convergence and epigraphic-convergence of functions and their applications has been done by many authors,
including Rom\'{a}n-Flores et al. \cite{roman,roman1,roman2,roman3,roman4} in the setting of convergence of fuzzy
sets on finite-dimensional spaces, level-convergence of functions on regular
topological spaces, and compactness of spaces of fuzzy sets on a metric
space, respectively; Fang et al. \cite{fang} in level-convergence of fuzzy numbers; Greco et al. \cite{greco,greco1} in variational convergence of fuzzy
sets, and characterization of relatively compact sets of fuzzy sets on metric spaces; and Attouch \cite{attouch} in calculus of variations.

The main tools involved in these studies are based on Kura\-towski li\-mits and their connections with important variational properties. We recall that one of the most relevant pro\-per\-ties of the epi-convergence is the preservation of maximum (minimum)
points in epi-convergent sequences of functions. This explains the success of these convergence schemes in the global optimization
theory (see \cite{attouch}). In the setting of global optimization, in \cite{zheng,zheng1, zheng2} Zheng introduced the
concept of robust function as a generalization of upper semicontinuous functions. For robust functions the problem of global minimization on compact sets have an integral approach allowing the creation of an algorithm for the problem. 

The aim of the present paper is two-fold. First, to introduce a weak version of level and epi-convergence on topological spaces and to study it. The main difference between these convergences is the existence of a generalized type of minimum which basically gives us information about the behaviour of the function around but not at the point. Second, to study robust functions defined on topological groups. The main advantage of this case is the great generality it provides. By defining convolutions on the set of level functions, we are able to prove that any such function is in fact the limit of robust level functions, which could be of great interest in optimization problems. 

The paper is organized as follows: In Section 2 we provide the basic tools that will be used in the article concerning limit of subsets, level functions, epigraphs, etc. In Section 3, we introduce the concept of weak level and epigraph convergence and prove several properties concerning them. The main result of this section shows that both concepts are equivalent if the limit function is level continuous. In Section 4 we analyze the case where our topological space is a Hausdorff topological group. In this context we are able to show that any level function is the limit of robust functions.

\section{Preliminaries}

 We use this section to introduce the basic concepts needed in the rest of the paper. We also prove some results concerning the main properties of level functions.

\subsection{Convergence of sequence of subsets}

This section is concerned with the convergence of nets of subspaces of a given topological spaces. For more on the subject the reader could consult \cite[Chapter 3]{klein}.

Let $(X, \tau)$ be a topological space and $(x_{\lambda})_{\lambda\in\Lambda}$ a net in $X$. Let $\VC_x:=\{U\in\tau; \;\; x\in U\}$ be the set of neighborhoods of $x$. 

For metric spaces $X,Y$ a function $f:X\rightarrow Y$ is continuous if and only if for any $x\in X$  
\[
x_{n}\rightarrow x\Rightarrow f(x_{n})\rightarrow f(x)\text{ for every
	sequence }(x_{n})_{n\in \mathbb{N}}.
\]
The same is not true for topological spaces. The notion of net introduced by
E. H. Moore and Herman L. Smith in \cite{Moore} generalize the notion of a sequence
and solve the problem.

In order to define a net we need first the following notion. A nonempty set $%
\Lambda $ with a reflexive and transitive binary relation $\leq $ is a
direct set if given any $\lambda ,\beta \in \Lambda $ there exist $\gamma
\in \Lambda $ with $\lambda \leq \gamma $ and $\beta \leq \gamma $. A subset $\Lambda_0\subset \Lambda$
is said to be {\it cofinal} if for any $\lambda\in\Lambda$ there exists $\mu\in\Lambda_0$ such that $\mu\geq\lambda$.

\begin{definition}
	Let $X$ be a topological space, and $\Lambda $ a direct set. Any function $%
	f:\Lambda \rightarrow X$ is a net. We usually identify $f$ with its image $(x_{\lambda})_{\lambda \in \Lambda }$, where $x_{\lambda}:=f(\lambda)$.
\end{definition}

A point $x\in X$ is a {\it limit point} of $(x_{\lambda})_{\lambda\in\Lambda}$ if for every $U\in\VC_x$ there exists $\mu\in\Lambda$ such that $x_{\lambda}\in U$ for all $\lambda\geq\mu$. Also, we say that $x$ is a {\it cluster point} of $(x_{\lambda})_{\lambda\in\Lambda}$ if for every $U\in\VC_x$ and every $\mu\in\Lambda$ there is $\lambda\in\Lambda$ such that $\lambda\geq\mu$ and $x_{\lambda}\in U$.

\begin{definition}
	Let $(A_{\lambda})_{\lambda\in\Lambda}$ be a net of subsets of $X$.
	\begin{itemize}
		\item[1.] A point $x\in X$ is a limit point of $(A_{\lambda})_{\lambda\in\Lambda}$ if for every $U\in\VC_x$, there exists $\mu\in\Lambda$ such that $A_{\lambda}\cap U\neq\emptyset$ for all $\lambda\geq\mu$;
		\item[2.] A point $x\in X$ is a cluster point of $(A_{\lambda})_{\lambda\in\Lambda}$ if for every $U\in\VC_x$ and every $\mu\in\Lambda$ there exists $\lambda\in \Lambda$ with $\lambda\geq\mu$ and $A_{\lambda}\cap U\neq\emptyset$;
		\item[3.] $\liminf_{\lambda}A_{\lambda}$ is the set of all limit points of $(A_{\lambda})_{\lambda\in\Lambda}$;
		\item[4.] $\limsup_{\lambda}A_{\lambda}$ is the set of all cluster points of $(A_{\lambda})_{\lambda\in\Lambda}$;
		\item[5.] If $\limsup_{\lambda}A_{\lambda}=\liminf_{\lambda}A_{\lambda}=A$ we say that the net $(A_{\lambda})_{\lambda\in\Lambda}$ converges to $A$ and write $A=\lim_{\lambda}A_{\lambda}.$
	\end{itemize}
\end{definition} 

By \cite[Propositions 3.2.11 and 3.2.12]{klein} it holds that 
$$\limsup_{\lambda}A_{\lambda}=\bigcap_{\mu\in\Lambda}\overline{\bigcup_{\lambda\geq\mu}A_{\lambda}}\;\;\;\mbox{ and }\;\;\;\liminf_{\lambda}A_{\lambda}=\bigcap_{\Lambda_0}\overline{\bigcup_{\lambda\in\Lambda_0}A_{\lambda}},$$
where $\Lambda_0$ is a cofinal set in $\Lambda$. In particular $\liminf_{\lambda}A_{\lambda}$ and $\limsup_{\lambda}A_{\lambda}$ are closed subsets of $X$ and it holds that $\liminf_{\lambda}A_{\lambda}\subset \limsup_{\lambda}A_{\lambda}$. 

\begin{definition}
	We say that a net $(A_{\lambda})_{\lambda\in\Lambda}$ is {\it monotone increasing (resp. decreasing)} if 
	$$\lambda\leq\mu, \;\;\;\mbox{ implies }\;\;\; A_{\lambda}\subset A_{\mu}\;\;\Bigl(\mbox{resp. }\;\;A_{\lambda}\supset A_{\mu}\Bigr).$$
\end{definition}
 
The next result assures that for monotone nets the limit exists.

\begin{proposition}
	Let $(A_{\lambda})_{\lambda\in\Lambda}$ be a net of subsets of $X$. 
	\begin{itemize}
		\item[(i)] If $(A_{\lambda})_{\lambda\in\Lambda}$ is monotone increasing then $\lim_{\lambda}A_{\lambda}=\overline{\bigcup_{\lambda\in\Lambda}A_{\lambda}}$;
		\item[(ii)] If $(A_{\lambda})_{\lambda\in\Lambda}$ is monotone decreasing then $\lim_{\lambda}A_{\lambda}=\bigcap_{\lambda\in\Lambda}\overline{A_{\lambda}}$;
	\end{itemize}
\end{proposition}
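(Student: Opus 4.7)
The plan is to compute $\limsup_{\lambda}A_{\lambda}$ and $\liminf_{\lambda}A_{\lambda}$ directly from the formulas recalled just before the statement,
$$\limsup_{\lambda}A_{\lambda}=\bigcap_{\mu\in\Lambda}\overline{\bigcup_{\lambda\geq\mu}A_{\lambda}},\qquad \liminf_{\lambda}A_{\lambda}=\bigcap_{\Lambda_0}\overline{\bigcup_{\lambda\in\Lambda_0}A_{\lambda}},$$
and to exploit monotonicity to simplify the unions that appear inside each intersection.

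For part (i), the key observation is that when $(A_{\lambda})_{\lambda\in\Lambda}$ is increasing, every tail $\bigcup_{\lambda\geq\mu}A_{\lambda}$ already equals the total union $\bigcup_{\lambda\in\Lambda}A_{\lambda}$. The nontrivial inclusion uses the direct-set hypothesis: given any $\nu\in\Lambda$, choose $\gamma\in\Lambda$ with $\gamma\geq\mu$ and $\gamma\geq\nu$; then $A_\nu\subset A_\gamma\subset\bigcup_{\lambda\geq\mu}A_\lambda$. The same argument shows that for every cofinal subset $\Lambda_0\subset\Lambda$ one has $\bigcup_{\lambda\in\Lambda_0}A_{\lambda}=\bigcup_{\lambda\in\Lambda}A_{\lambda}$. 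Substituting into the two formulas above collapses both intersections to the single closed set $\overline{\bigcup_{\lambda\in\Lambda}A_{\lambda}}$, which is the claimed limit.

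For part (ii), the tails simplify in a different way: since $\lambda\geq\mu$ forces $A_\lambda\subset A_\mu$ in the decreasing case, one gets $\bigcup_{\lambda\geq\mu}A_\lambda=A_\mu$ (the inclusion $\supset$ coming from $\lambda=\mu$). Plugging this into the $\limsup$ formula yields $\limsup_{\lambda}A_{\lambda}=\bigcap_{\mu\in\Lambda}\overline{A_\mu}$. The general inclusion $\liminf_{\lambda}A_{\lambda}\subset\limsup_{\lambda}A_{\lambda}$, already noted in the paper, takes care of one half of the desired identity for $\liminf$.

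For the reverse inclusion $\bigcap_{\lambda\in\Lambda}\overline{A_\lambda}\subset\liminf_{\lambda}A_{\lambda}$, I would argue straight from the definition: pick $x\in\bigcap_{\lambda\in\Lambda}\overline{A_\lambda}$ and $U\in\VC_x$; because $x\in\overline{A_\lambda}$ for every $\lambda\in\Lambda$, we have $A_\lambda\cap U\neq\emptyset$ for every $\lambda$, and in particular for all $\lambda\geq\mu$ (for any $\mu\in\Lambda$), so $x$ is a limit point of the net. Combining the two inclusions gives $\liminf_{\lambda}A_{\lambda}=\limsup_{\lambda}A_{\lambda}=\bigcap_{\lambda\in\Lambda}\overline{A_\lambda}$. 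There is no genuine obstacle here; the whole argument is bookkeeping with the definitions, the only mildly delicate point being the use of the direct-set property in (i) to merge the tails into the global union.
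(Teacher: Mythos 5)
Your proof is correct and follows essentially the same route as the paper: simplify the tail unions (and the cofinal unions) using monotonicity and substitute into the stated formulas for $\limsup_{\lambda}A_{\lambda}$ and $\liminf_{\lambda}A_{\lambda}$. The only differences are cosmetic --- you write out part (i) in full where the paper merely declares it ``similar'', and in part (ii) you obtain the inclusion $\bigcap_{\lambda\in\Lambda}\overline{A_{\lambda}}\subset\liminf_{\lambda}A_{\lambda}$ directly from the definition of a limit point rather than via the cofinal-union formula, both of which amount to the same bookkeeping.
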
 

\begin{proof}
	Since the proof of both cases are similar, let us only show the monotone decreasing case. In this situation, it holds that 
	$$\forall\mu\in\Lambda, \;\;\;\bigcup_{\lambda\geq\mu}A_{\lambda}=A_{\mu}\;\;\;\implies\;\;\;\limsup_{\lambda}A_{\lambda}=\bigcap_{\mu\in\Lambda}\overline{A_{\mu}}.$$
	On the other hand, 
	$$\forall\lambda\in\Lambda, \;\;\;\bigcap_{\mu\in\Lambda}\overline{A_{\mu}}\subset \overline{A_{\lambda}}\;\;\;\implies\;\;\;\bigcap_{\mu\in\Lambda}\overline{A_{\mu}}\subset \overline{\bigcup_{\lambda\in\Lambda_0}A_{\lambda}},$$
	for any cofinal set $\Lambda_0$ of $\Lambda$. Hence 
	$$\limsup_{\lambda}A_{\lambda}=\bigcap_{\mu\in\Lambda}\overline{A_{\mu}}\subset\bigcap_{\Lambda_0}\overline{\bigcup_{\lambda\in\Lambda_0}A_{\lambda}}=\liminf A_{\lambda},$$
	which implies the result.
\end{proof}

Let $(\alpha_{\lambda})_{\lambda\in\Lambda}$ be a net of real numbers with $\alpha_{\lambda}\rightarrow\alpha$. In the sequel, we use the notation $\alpha_{\lambda}\nearrow$ (resp. $\alpha_{\lambda}\searrow$) when the net converges to $\alpha$ and is monotonic crecent (resp. decrescent) and there is no repetition of elements.

\subsection{Level functions}

Let $X$ be a topological space and consider $f:X\rightarrow [0, +\infty]$ a function. 

\begin{definition}
	For any given $\alpha>0$ the {\it $\alpha$-level sets} of $f$ reads as
	$$\ell_{\alpha}f:=\{x\in X; \;\;f(x)<\alpha\}\;\;\;\;\mbox{ and }\;\;\;\;L_{\alpha}f:=\{x\in X; \;\;f(x)\leq\alpha\}.$$
\end{definition}

We consider the set of {\it level functions} given by 
$$\FC(X):=\left\{f:X\rightarrow[0, +\infty], \;\;\ell_{\alpha}f\neq\emptyset, \;\forall\alpha>0\right\}.$$

The next proposition characterizes the level sets of a function $f\in \FC(X)$ by means of limits.

	\begin{proposition}
		\label{limits}
		For any $f\in\FC(X)$ and any $\alpha> 0$ it holds that
		$$\liminf_{\beta\rightarrow\alpha}L_{\beta}f=\overline{\ell_{\alpha}f}\subset \overline{L_{\alpha}f}\subset \bigcap_{\varepsilon>0}\overline{\ell_{\alpha+\varepsilon}f}= \limsup_{\beta\rightarrow\alpha}L_{\beta}f	;$$
		\end{proposition}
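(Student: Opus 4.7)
The plan is to split the chain into four pieces and handle them in decreasing order of depth. The two middle inclusions, $\overline{\ell_\alpha f}\subset\overline{L_\alpha f}$ and $\overline{L_\alpha f}\subset\bigcap_{\varepsilon>0}\overline{\ell_{\alpha+\varepsilon}f}$, reduce to the elementary set inclusions $\ell_\alpha f\subset L_\alpha f\subset\ell_{\alpha+\varepsilon}f$ (the latter for every $\varepsilon>0$) after applying the monotonicity of the closure; I would dispose of them in one line.

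For the leftmost equality $\liminf_{\beta\to\alpha}L_\beta f=\overline{\ell_\alpha f}$, I would interpret $\beta\to\alpha$ as the net indexed by $\beta\neq\alpha$ ordered by proximity to $\alpha$, so that $x$ is a limit point iff every $U\in\VC_x$ meets $L_\beta f$ for all $\beta$ in some deleted neighborhood of $\alpha$. For the inclusion $\overline{\ell_\alpha f}\subset\liminf$, I would take $x\in\overline{\ell_\alpha f}$ and $U\in\VC_x$, pick $y\in U\cap\ell_\alpha f$, and set $\delta:=\alpha-f(y)>0$: any $\beta$ with $|\beta-\alpha|<\delta$ satisfies $\beta>f(y)$, so $y\in L_\beta f\cap U$. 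For the reverse, given $x$ in the liminf and $U\in\VC_x$, I would choose a specific $\beta<\alpha$ in the prescribed deleted neighborhood and extract $y\in L_\beta f\cap U$; then $f(y)\leq\beta<\alpha$ forces $y\in\ell_\alpha f\cap U$.

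For the rightmost equality, I would invoke the already-recalled characterization $\limsup_\lambda A_\lambda=\bigcap_\mu\overline{\bigcup_{\lambda\geq\mu}A_\lambda}$. Since tails of the net are precisely the sets $\{\beta:0<|\beta-\alpha|<\varepsilon\}$, the whole task reduces to the identity
$$\bigcup_{0<|\beta-\alpha|<\varepsilon}L_\beta f=\ell_{\alpha+\varepsilon}f.$$
The inclusion $\subset$ follows from $L_\beta f\subset\ell_{\alpha+\varepsilon}f$ whenever $\beta<\alpha+\varepsilon$, and for $\supset$ I would assign to each $y\in\ell_{\alpha+\varepsilon}f$ some $\beta\in(\max\{f(y),\alpha-\varepsilon\},\alpha+\varepsilon)\setminus\{\alpha\}$, a non-empty interval that places $\beta$ in a tail while ensuring $y\in L_\beta f$.

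The only subtle point is the two-sided nature of $\beta\to\alpha$, which is exactly what produces the asymmetry between the two ends of the chain: the liminf realizes only the strict sublevel $\overline{\ell_\alpha f}$ because limit points must accommodate $\beta<\alpha$, while the limsup inflates to the $(\alpha+\varepsilon)$-slabs because cluster points can be harvested using $\beta$ approaching $\alpha$ from above. This asymmetry is also why the middle inclusions in the chain can fail to be equalities, foreshadowing the role of level-continuity in the later sections.
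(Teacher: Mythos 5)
Your proof is correct. For the two middle inclusions and the equality $\liminf_{\beta\rightarrow\alpha}L_{\beta}f=\overline{\ell_{\alpha}f}$ you run along essentially the same lines as the paper: the paper also obtains $\overline{\ell_{\alpha}f}\subset\liminf_{\beta\rightarrow\alpha}L_{\beta}f$ by observing that a point with $f(x)\leq\alpha-\varepsilon$ belongs to $L_{\beta}f$ for all $\beta$ eventually, and the reverse inclusion by testing against indices $\beta$ strictly below $\alpha$ (the paper uses a net $\beta_{\lambda}\nearrow\alpha$ where you pick a single such $\beta$ per neighborhood; same idea). Where you genuinely diverge is the rightmost equality. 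The paper proves $\limsup_{\beta\rightarrow\alpha}L_{\beta}f\subset\bigcap_{\varepsilon>0}\overline{\ell_{\alpha+\varepsilon}f}$ and its converse as two separate net arguments, the converse requiring an explicit net indexed by the neighborhoods of $(x,\alpha)$ in $X\times(0,+\infty)$ --- a passage that is rather opaque as written, since the neighborhood $U$ of $(x,\alpha)$ appears there without introduction. You instead invoke the already-recalled tail formula $\limsup_{\lambda}A_{\lambda}=\bigcap_{\mu}\overline{\bigcup_{\lambda\geq\mu}A_{\lambda}}$ and reduce both inclusions at once to the purely set-theoretic identity $\bigcup_{0<|\beta-\alpha|<\varepsilon}L_{\beta}f=\ell_{\alpha+\varepsilon}f$, which you verify correctly (the interval $\bigl(\max\{f(y),\alpha-\varepsilon\},\alpha+\varepsilon\bigr)\setminus\{\alpha\}$ is nonempty and every element of it is an admissible index $\beta>f(y)$). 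This buys a cleaner, symmetric treatment and obliges you to make explicit the directed set underlying $\beta\rightarrow\alpha$, which the paper never does; the only price is the routine remark that open and closed deleted neighborhoods of $\alpha$ are mutually cofinal, so the intersection over tails may be taken over the sets $\{\beta:0<|\beta-\alpha|<\varepsilon\}$. Your closing observation about the source of the asymmetry between the two ends of the chain is accurate and matches the role level continuity plays later in the paper.
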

		
		\begin{proof}
			Since the inclusions 
			$$\overline{\ell_{\alpha}f}\subset \overline{L_{\alpha}f}\subset \bigcap_{\varepsilon>0}\overline{\ell_{\alpha+\varepsilon}f},$$
			follows directly from the definition of level sets, we will only show the equalities.
			
			Let then $x\in \ell_{\alpha}f$ and a net $\beta_{\lambda}\rightarrow \alpha$. There exists $\varepsilon>0$ such that $f(x)\leq \alpha-\varepsilon$. Hence,  $$\beta_{\lambda}\rightarrow\alpha\;\;\implies\;\;\exists \lambda_0\in\Lambda; \;\;\beta_{\lambda}\geq \alpha-\varepsilon,  \;\;\;\;\;\;\;\forall \lambda\geq \lambda_0,$$
			and so $f(x)\leq\alpha-\varepsilon\leq\beta_{\lambda}$ implying that $x\in L_{\beta_{\lambda}}f$ and showing that $x\in \liminf_{\beta\rightarrow\alpha}L_{\beta}f$. 
			Therefore, 
			$$\ell_{\alpha}f\subset \liminf_{\beta\rightarrow\alpha}L_{\beta}f\;\;\implies\;\;\overline{\ell_{\alpha}f}\subset \liminf_{\beta\rightarrow\alpha}L_{\beta}f.$$
			Reciprocally, let $x\in\liminf_{\beta\rightarrow\alpha}L_{\beta}f$ and consider a net $\beta_{\lambda} \nearrow\alpha$. By definition, 
			$$\exists x_{\lambda}\in L_{\beta_{\lambda}}f, \;\;\mbox{ such that }\;\;x_{\lambda}\rightarrow x.$$
			Therefore, $f(x_{\lambda})<\beta_{\lambda}<\alpha$ which implies $x_{\lambda}\in \ell_{\alpha}f$ implying that $x\in \overline{\ell_{\alpha}}f$ and so
			$\overline{\ell_{\alpha}f}= \liminf_{\beta\rightarrow\alpha}L_{\beta}f$ .
			
			Let us now consider $x\in\limsup_{\beta\rightarrow\alpha}L_{\beta}f$. By definition, there exists a net $\beta_{\lambda}\rightarrow \alpha$ and $x_{\lambda}\in L_{\beta_{\lambda}}f$ with $x_{\lambda}\rightarrow x.$ In particular, for any $\varepsilon>0$ there exists $\lambda_0\in\N$ such that $\beta_{\lambda}< \alpha+\varepsilon$ implying that 
			$$\forall \lambda\geq \lambda_0, \;\; f(x_{\lambda})\leq \beta_{\lambda}\leq \ell_{\alpha+\varepsilon}\;\;\implies \;\;x_{\lambda}\in \ell_{\alpha+\varepsilon}f\;\;\implies\;\;x\in\overline{\ell_{\alpha+\varepsilon}f},$$
			and hence $\limsup_{\beta\rightarrow\alpha}L_{\beta}f\subset\bigcap_{\varepsilon>0}\overline{\ell_{\alpha+\varepsilon}}f$. 
			
			Reciprocally, let $x\in \bigcap_{\varepsilon>0}\overline{\ell_{\alpha+\varepsilon}}f$ and consider $W\times I\subset U$ with $W\in\VC_{x}$ and $I\in\VC_{\alpha}$. The fact that $x\in \overline{\ell_{\alpha+\varepsilon}f}$ shows that $W\cap \ell_{\alpha+\varepsilon}f\neq\emptyset$ for all $\varepsilon>0$. In particular, there exists $\varepsilon_0>0$ such that $\alpha+\varepsilon_0\in I$. By taking $x_0\in W\cap \ell_{\alpha+\varepsilon_0}f$ and we get that $(x_0, \alpha+\varepsilon_0)\in W\times I$. Therefore, 
			$$\forall U\in\VC_{(x, \alpha)}, \;\;\exists (x_U, \beta_U)\in U\;\;\mbox{ such that }\;\; x_U\in \ell_{\beta_U}f,$$
			and hence, $(x_U, \beta_U)_{U\in\VC_{(x, \alpha)}}$ is a net such that $(x_U, \beta_U)\rightarrow (x, \alpha)$. It follows that $x\in\limsup_{\beta\rightarrow\alpha}L_{\beta}f$ and concluding the proof.	
			\end{proof}
			
\bigskip

\begin{example}
	Let $A\subset X$ and consider $\chi_A$ its characteristic function, that is,  
	$$\chi_A(x)=\left\{\begin{array}{c}
	0, \;x\in A\\ 1, \;x\notin A
	\end{array}\right..$$
	Then, 
	 $\ell_1\chi_A=A\;\;\;\mbox{ and }\;\;\;L_1f=X$ and hence
	 $$\overline{\ell_1\chi_A}=\overline{L_1\chi_A}\;\;\;\iff\;\;\; A\;\mbox{ is dense in }\;X.$$
\end{example}

\begin{definition}
	For any $f\in\FC(X)$ the {\it epigraphs} of $f$ reads as
	$$e(f):=\{(x, \alpha)\in X\times (0, +\infty); \;x\in \ell_{\alpha}f \}\;\;\mbox{ and }\;\;E(f):=\{(x, \alpha)\in X\times (0, +\infty); \;x\in L_{\alpha}f \}.$$
\end{definition}

The next result relates the topological properties of epigraphs and level sets

\begin{proposition}
	\label{Epi&level}
		With the previous notations, it holds:
	     
	     \begin{enumerate}
	     \item $\overline{E(f)}=\bigcup_{\alpha>0}\Bigl(\{\alpha\}\times\limsup_{\beta\rightarrow\alpha}L_{\beta}f\Bigr)$
		\item $\overline{\inner \ell_{\alpha}f}=\overline{\ell_{\alpha}f}, \;\;\;\;\forall\alpha>0\;\;\implies \;\;\;\; \overline{\inner e(f)}=\overline{e(f)}$;
	\end{enumerate}
\end{proposition}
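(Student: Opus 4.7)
The plan is to combine the characterization of closure by nets with Proposition~\ref{limits}. A pair $(x,\alpha)$ lies in $\overline{E(f)}$ precisely when there is a net $(x_\lambda,\beta_\lambda)\in E(f)$ converging to $(x,\alpha)$ in $X\times(0,+\infty)$. Unpacking the definition of $E(f)$, this says $x_\lambda\in L_{\beta_\lambda}f$, $x_\lambda\to x$ and $\beta_\lambda\to\alpha$, which is exactly the condition that $x\in\limsup_{\beta\to\alpha}L_\beta f$ in the sense used in Proposition~\ref{limits}. Reading this equivalence fiber-by-fiber over $\alpha>0$ yields the displayed union representation at once.

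\textbf{Item 2.} The inclusion $\overline{\inner e(f)}\subset\overline{e(f)}$ is trivial from $\inner e(f)\subset e(f)$. Since $\overline{\inner e(f)}$ is closed, the reverse inclusion reduces to showing $e(f)\subset\overline{\inner e(f)}$. Given $(x,\alpha)\in e(f)$, I would exploit the strict inequality $f(x)<\alpha$ by inserting an intermediate level $\alpha'\in(f(x),\alpha)$. Then $x\in\ell_{\alpha'}f$, hence by hypothesis $x\in\overline{\ell_{\alpha'}f}=\overline{\inner\ell_{\alpha'}f}$, so there is a net $x_\lambda\to x$ with $x_\lambda\in\inner\ell_{\alpha'}f$. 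The product $\inner\ell_{\alpha'}f\times(\alpha',+\infty)$ is open in $X\times(0,+\infty)$ and contained in $e(f)$, and since $\alpha>\alpha'$ it contains every point $(x_\lambda,\alpha)$; thus $(x_\lambda,\alpha)\in\inner e(f)$ and $(x_\lambda,\alpha)\to(x,\alpha)$, yielding the desired approximation.

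\textbf{Main obstacle.} The crucial trick is the introduction of the intermediate level $\alpha'$. A naive attempt to approximate $(x,\alpha)$ by points $(x_\lambda,\alpha)$ with $x_\lambda\in\inner\ell_\alpha f$ does not work directly: having an open neighborhood of $x_\lambda$ inside $\ell_\alpha f$ in $X$ is not enough to build an open product neighborhood of $(x_\lambda,\alpha)$ inside $e(f)$, since one also needs an open vertical window around $\alpha$. The buffer $\alpha'<\alpha$ simultaneously supplies the horizontal information (via the hypothesis applied at the level $\alpha'$) and the vertical room $(\alpha',+\infty)$. Once this reduction is identified, the remainder of the argument is a short topological manipulation and the closedness of $\overline{\inner e(f)}$ finishes the job.
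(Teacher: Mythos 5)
Your proposal is correct and follows essentially the same route as the paper: item 1 is the same unwinding of the closure of $E(f)$ fiberwise over $\alpha$ against the characterization of $\limsup_{\beta\rightarrow\alpha}L_{\beta}f$ from Proposition~\ref{limits} (you phrase it with nets, the paper with neighborhood bases), and item 2 uses exactly the paper's key device of inserting an intermediate level below $\alpha$ and exhibiting the open box $\inner \ell_{\alpha'}f\times(\alpha',+\infty)$ inside $e(f)$. The only cosmetic difference is that the paper takes a symmetric window $(\alpha-\varepsilon,\alpha+\varepsilon)$ where you take a half-infinite one; both work.
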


\begin{proof}
	1. Let $(x, \alpha)\in \overline{E(f)}$. Then, for any $U\in\VC_x$ and $I\in\VC_{\alpha}$ it holds that $\left(U\times I\right)\cap E(f)\neq\emptyset.$
	In particular, by considering $I=(\alpha-\varepsilon, \alpha+\varepsilon)$ we get that $U\cap \ell_{\alpha+\varepsilon}\neq\emptyset$ and by Proposition \ref{limits} we obtain that 
	$$x\in\bigcap_{\varepsilon>0}\overline{\ell_{\alpha+\varepsilon}f}=\limsup_{\beta\rightarrow \alpha}L_{\beta}f\;\;\implies\;\;(x, \alpha)\in\left(\limsup_{\beta\rightarrow \alpha}L_{\beta}f\times\{\alpha\}\right)\;\;\implies\;\;\overline{E(f)}\subset\bigcup_{\alpha>0}\left(\limsup_{\beta\rightarrow \alpha}L_{\beta}f\times\{\alpha\}\right).$$
	On the other hand, let $x\in\limsup_{\beta\rightarrow\alpha}L_{\beta}f$ and consider $U\times I\in\VC_{(x, \alpha)}$. By Proposition \ref{limits} it holds that $U\cap\ell_{\alpha+\varepsilon}f\neq\emptyset$ for all $\varepsilon>0$. Therefore, by considering $\varepsilon>0$ such that $\alpha+\varepsilon\in I$ and $y\in U\cap\ell_{\alpha+\varepsilon}$ we get
	$$f(y)<\alpha+\varepsilon\;\;\implies\;\; (y, \alpha+\varepsilon)\in (U\times I)\cap E(f)\;\;\implies\;\; (x, \alpha)\in \overline{E(f)}\;\;\implies\;\; \bigcup_{\alpha>0}\left(\limsup_{\beta\rightarrow \alpha}L_{\beta}f\times\{\alpha\}\right)\subset \overline{E(f)}.$$
	
	2. Let $(x, \alpha)\in e(f)$ and consider $U\times I\in\VC_{(x, \alpha)}$. Then, 
	$$f(x)<\alpha\;\;\implies \;\;\exists\varepsilon>0; \;\; f(x)<\alpha-\varepsilon\;\;\mbox{ and }\;\; (\alpha-\varepsilon, \alpha+\varepsilon)\subset I.$$
	Also
	$$\overline{\inner\ell_{\alpha-\varepsilon}f}=\overline{\ell_{\alpha-\varepsilon}f}\;\;\implies\;\; U\cap \inner\ell_{\alpha-\varepsilon}f\neq\emptyset.$$
	On the other hand, the set 
	$$V:=U\cap \inner\ell_{\alpha-\varepsilon}f\times (\alpha-\varepsilon, \alpha+\varepsilon)\;\;\mbox{ is open and is contained in }U\times I.$$
	Moreover,
	$$(y, \beta)\in V\;\;\implies\;\; f(y)<\alpha-\varepsilon<\beta\;\;\implies \;\;V\subset \inner e(f)\;\;\implies\;\; (U\times I)\cap\inner e(f),$$
	implying that 
	$$e(f)\subset \overline{\inner e(f)}\;\;\mbox{ and hence }\;\;\overline{(f)}=\overline{\inner e(f)}.$$
	\end{proof}

\subsection{Generalized minimum and level continuity}

In this section we define the concept of minimum values for a level function. As we will see this notion will be important for convergence.

	\begin{definition}
		\label{min}
	 A point $x\in X$ is an {\it $\alpha$-generalized local minimum} of a given function $f\in \FC(X)$ if
		\begin{equation}
		\label{generalizedminimum}
		x\in \limsup_{\beta\rightarrow\alpha}L_{\beta}f\setminus\liminf_{\beta\rightarrow\alpha}L_{\beta}f.
		\end{equation}
		If in addition $f(x)=\alpha$ we say that $x$ is an {\it $\alpha$-local minimum} of $f$. We denote by $\MC_{\alpha}(f)$ the set of the $\alpha$-generalized minimum of $f$ and by $\MC(f)=\bigcup_{\alpha>0}\MC_{\alpha}(f)$ the set of generalized minimum of $f$.
	\end{definition}
	
	\begin{remark}
	 It is important to stress that a generalized minimum $x\in X$ gives us information about the behaviour of the graph of $f$ around it while a minimum gives us also information about the value $f(x)$. A function can have generalized minimum but not minimum as the next example shows.
	 \end{remark}
		
		\begin{example}
			
		Consider
		$$f:\R\rightarrow [0, +\infty], \;\;\;\mbox{ defined as }\;\;\;f(x):=\left\{\begin{array}{cc}
		0, \;&\; \mbox{ if }x\in [0, 1]\\
		-x+3, \;&\; \mbox{ if }x\in (1, 2)\\
		2, \;&\; \mbox{ if }x=2\\
		x-1, \;&\; \mbox{ if }x\in (2, +\infty)\\
		\end{array}\right.$$	
		
		A simple calculation shows that 
		$$\ell_1f=L_1f=[0, 1], \;\;\;\mbox{ and }\;\;\; \ell_{1+\varepsilon}f=[0, 1]\cup (2-\varepsilon, 2)\cup (2, 2+\varepsilon), \;\;\varepsilon\in (0, 1)$$
		implying that 
		$$\limsup_{\beta\rightarrow 1}L_{\beta}f=\bigcap_{\varepsilon>0}\overline{\ell_{1+\varepsilon}f}=[0, 1]\cup\{2\}\;\;\mbox{ and }\;\; \liminf_{\beta\rightarrow 1}L_{\beta}f=\overline{\ell_1f}=[0, 1].$$
		Since for $\alpha\neq 1, \;\limsup_{\beta\rightarrow\alpha}L_{\beta}f=\overline{\ell_{\alpha}f}$ then necessarily  
		$\MC(f)=\MC_1(f)=\{2\}$. Note also that $f$ has not minimum (see Figure \ref{fig1}).			
		\end{example}
		
		Next we define weak level continuity of a level function.
	
		\begin{definition}
			For any $\alpha>0$, a function $f\in\FC(X)$ is said to be {\it weak $\alpha$-level continuous} if $\MC_{\alpha}(f)=\emptyset$. 
			We say that $f$ is {\it weak level continuous} if it is weak $\alpha$-level continuous for all $\alpha>0$.	
		\end{definition}
		
		It is straightforward to see that $f\in \FC(X)$ is weak level continuous iff 
		$$\forall\alpha>0, \;\;\;\lim_{\beta\rightarrow\alpha}L_{\beta}f=\overline{L_{\alpha}f}.$$

\section{Convergence by level and by epigraph}

	In this section we introduce the concepts of level and epigraph convergence. We also analyze conditions for the equivalence of both concepts.	
	
	\begin{definition}
	 Let $(f_{\lambda})_{\lambda\in\Lambda}\subset \FC(X)$ be a net. We say that $f_{\lambda}$ {\it weak converges by level ($L$-converges)} to a function $f\in \FC(X)$ (or simply $f_{\lambda}\xrightarrow{\text{L}}f$) if 
	 $$\forall \alpha>0, \;\;\;\;\;L_{\alpha}f_{\lambda}\rightarrow \overline{L_{\alpha}f}.$$ 
	
	Analogously, a net $(f_{\lambda})_{\lambda\in\Lambda}\subset \FC(X)$ {\it weak converges by epigraph ($E$-converges)} to a function $f\in \FC(X)$ (or simply $f_{\lambda}\xrightarrow{\text{E}}f$) when 
	 $$E(f_{\lambda})\rightarrow \overline{E(f)}.$$ 
	 	\end{definition}
	  
	 We say that the function $f\in \FC(X)$ is a $L$-limit of the net $(f_{\lambda})_{\lambda\in\Lambda}$ if $f_{\lambda}\xrightarrow{L}f$ and and $E$-limit
	  if $f_{\lambda}\xrightarrow{E}f$.
	  
	 The next example shows that a net can be $E$-convergent but not $L$-convergent. 
	  
	  \begin{example}
	  	\label{ex}
	Let $(X, \Sigma, \mu)$ be a measure spaces and $L^p(X, \nu)$ its associated Banach space. Let $f_0\in L^p(X, \nu)$ with $\|f_0\|_p>1$ and consider $(\varepsilon_{\lambda})_{\lambda\in\Lambda}\subset (0, 1)$ a decreasing net such that $\varepsilon_{\lambda}\searrow0$. Define
	$$F_{\lambda}(f):=\left\{\begin{array}{cc}
	0, \;&\mbox{ if }\; f=0\\
	1-\varepsilon_{\lambda}\;&\mbox{ if }\; f\in B(f_0, \varepsilon_{\lambda})\\
	\|f-f_0\|_p+1\;&\mbox{ otherwise }
	\end{array}\right.\;\;\mbox{ and }\;\;F(f):=\left\{\begin{array}{cc}
	0, \;&\mbox{ if }\; f=0\\
	2, \;& \mbox{ if }\; f=f_0\\
	\|f-f_0\|_p+1\;&\mbox{ otherwise }
	\end{array}\right.,$$
	where $B(f_0, \varepsilon_{\lambda}):=\{f\in L^p(X, \nu); \;\|f-f_0\|_p<\varepsilon_{\lambda}\}$ is the $\varepsilon_{\lambda}$ ball in $L^p(X, \nu)$ centered at $f_0$.
	Since 
	$$B(f_0, \varepsilon_{\lambda})\supset B(f_0, \varepsilon_{\mu}), \;\;\;\mbox{ if }\;\;\;\lambda\leq\mu,$$
	we have that 
	$$\lim_{\lambda}L_{1}F_{\lambda}=\bigcap_{\lambda}\overline{\left(B(f_0, \varepsilon_{\lambda})\cup\{0\}\right)}=\{0, f_0\}.$$
	On the other hand, $\overline{L_1F}=L_1f=\{0\}$ showing that $F_{\lambda}$ does not $L$-converges to $F$. 
	$$E(f)=\{0\}\times(0, +\infty)\cup \{f_0\}\times[2, +\infty)\cup \bigcup_{\alpha\in(1, +\infty)}\left(\overline{B(f_0, \alpha)}\times\{\alpha\}\right)$$
	and so 
	$$\overline{E(f)}=\{0\}\times(0, +\infty)\cup \bigcup_{\alpha\in[1, +\infty)}\left(\overline{B(f_0, \alpha)}\times\{\alpha\}\right)$$
	
	Also,
	
	$$E(f_{\lambda})=\{0\}\times(0, +\infty)\cup \overline{B(f_0, \varepsilon_{\lambda})}\times [1-\varepsilon_{\lambda}, +\infty)\cup \bigcup_{\alpha\in [1+\varepsilon_{\lambda}, +\infty)}\left(\overline{B(f_0, \alpha)}\times\{\alpha\}\right),$$
	implying that 
	$$\lim_{\lambda}E(f_{\lambda})=\{0\}\times(0, +\infty)\cup \bigcup_{\alpha\in[1, +\infty)}\left(\overline{B(f_0, \alpha)}\times\{\alpha\}\right)=\overline{E(f)},$$
	and hence $f_{\lambda}\xrightarrow{E} f$ (see Figure \ref{fig2}).
	  \end{example}

	\begin{remark}
		A simple calculation shows us that 
			$$L_{\alpha}F_{\lambda}=\left\{\begin{array}{cc}
			\{0\}, \;&\mbox{ if }\;\alpha\in(0, 1-\varepsilon_{\lambda})\\
			\overline{B(f_0, \varepsilon_{\lambda})}\cup \{0\} \;&\mbox{ if }\; \alpha\in [1-\varepsilon_{\lambda}, 1]\\
			\overline{B(0, \alpha)}\cup\{0\}, \;&\mbox{ if }\; \alpha>1\}
			\end{array}\right.$$
			implying that the functions $F_{\lambda}$ in Example \ref{ex} are also lower semicontinuous.
	\end{remark}
	  
	  The next lemma relates the limits of epigraphs and level sets. It will be important in the proof of our main result.
	
	\begin{lemma}
		\label{closure}
		For all $\alpha>0$, it holds: 
		\begin{itemize}
			\item[1.] $x\in\limsup_{\lambda}L_{\alpha}f_{\lambda} \; \;\implies \;\;\; (x, \alpha)\in \limsup_{\lambda}E(f_{\lambda});$
			\item[2.]$x\in\liminf_{\lambda}L_{\alpha}f_{\lambda}\; \;\implies \;\;\; (x, \alpha)\in \liminf_{\lambda}E(f_{\lambda});$	 		
			\item[3.] $(x, \alpha)\in \limsup_{\lambda}E(f_{\lambda})\;\;\implies \;\;\forall\varepsilon>0, \;\;\;x\in\limsup_{\lambda}L_{\alpha+\varepsilon}f_{\lambda};$
			 \item[4.] $(x, \alpha)\in \liminf_{\lambda}E(f_{\lambda})\;\;\;\implies \;\;\; \forall\varepsilon>0,\;\;x\in\liminf_{\lambda}L_{\alpha+\varepsilon}f_{\lambda}.$
		\end{itemize}
	\end{lemma}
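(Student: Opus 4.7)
The plan is to derive all four implications directly from the definitions of cluster and limit points for nets of sets (recalled in the Preliminaries) together with the tautological link between level sets and epigraphs:
\[
y\in L_{\beta}f_{\lambda}\;\iff\;(y,\beta)\in E(f_{\lambda}).
\]
The four items split into two dual pairs. Items (1) and (2) transfer information upward, from level sets to the epigraph. Items (3) and (4) go in the opposite direction and, crucially, force a small shift $\varepsilon>0$ in the level parameter, because the second coordinate of a point of $E(f_{\lambda})$ is only constrained to lie in a neighborhood of $\alpha$, not to equal $\alpha$ exactly.

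For (1) and (2), I would argue as follows. Fix a basic product neighborhood $U\times I\in\VC_{(x,\alpha)}$; since $\alpha\in I$, any $y\in L_{\alpha}f_{\lambda}\cap U$ produces a point $(y,\alpha)\in E(f_{\lambda})\cap(U\times I)$. So a cofinal supply of indices $\lambda$ for which $L_{\alpha}f_{\lambda}\cap U\neq\emptyset$ automatically yields a cofinal supply of indices for which $E(f_{\lambda})\cap(U\times I)\neq\emptyset$, and an eventually-true statement about $L_{\alpha}f_{\lambda}\cap U$ translates into an eventually-true statement about $E(f_{\lambda})\cap(U\times I)$. Matching the quantifier patterns of $\limsup$ and $\liminf$ for nets of subsets then gives (1) and (2) respectively.

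For (3) and (4) I would reverse the construction with a carefully chosen neighborhood of $\alpha$. Given $\varepsilon>0$ and $U\in\VC_{x}$, take $I:=(\alpha-\varepsilon,\alpha+\varepsilon)\in\VC_{\alpha}$ and consider $U\times I\in\VC_{(x,\alpha)}$. Any point $(y_{\lambda},\beta_{\lambda})\in E(f_{\lambda})\cap(U\times I)$ satisfies
\[
f_{\lambda}(y_{\lambda})\leq\beta_{\lambda}<\alpha+\varepsilon,
\]
so $y_{\lambda}\in L_{\alpha+\varepsilon}f_{\lambda}\cap U$. Applying this to the cofinal (resp.\ eventual) family of indices produced by $(x,\alpha)\in\limsup_{\lambda}E(f_{\lambda})$ (resp.\ $\liminf_{\lambda}E(f_{\lambda})$) delivers the cofinal (resp.\ eventual) family of indices required for $x\in\limsup_{\lambda}L_{\alpha+\varepsilon}f_{\lambda}$ (resp.\ $\liminf_{\lambda}L_{\alpha+\varepsilon}f_{\lambda}$).

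There is no real obstacle here; the proof is a careful bookkeeping of neighborhoods and quantifiers. The one conceptual point worth flagging in the writeup is precisely the asymmetry observed above: the map $y\mapsto(y,\alpha)$ sends $L_{\alpha}f_{\lambda}$ into $E(f_{\lambda})$ with no loss, but the reverse passage from $E(f_{\lambda})$ back to level sets has to absorb the slack in the second coordinate, which is exactly why (3) and (4) state their conclusion at the shifted level $\alpha+\varepsilon$ rather than at $\alpha$.
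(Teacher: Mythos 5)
Your proof is correct and follows essentially the same route as the paper's: the map $y\mapsto(y,\alpha)$ carries points of $L_{\alpha}f_{\lambda}$ into $E(f_{\lambda})$ for items (1)--(2), and the $\varepsilon$-shift absorbs the slack in the second coordinate for items (3)--(4). The only difference is cosmetic — you work directly with the neighborhood-based definitions of cluster and limit points, while the paper phrases the same argument via convergent (sub)nets of selected points — so nothing further is needed.
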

		
		\begin{proof}
			Since the items 1. and 3. are analogous to 2. and 4. respectively, we will only show 1. and 4.
			
			1. Let $x\in \limsup_{\lambda}L_{\alpha}f_{\lambda}$. By definition, there exists a subnet $\lambda_{\mu}\rightarrow+\infty$ and $x_{\lambda_{\mu}}\in L_{\alpha}f_{\lambda_{\mu}}$ with $x_{\lambda_{\mu}}\rightarrow x$. Consequently, 
			$$(x_{\lambda_{\mu}}, \alpha)\in E(f_{\lambda_{\mu}})\;\;\mbox{ and }\;\;(x_{\lambda_{\mu}}, \alpha)\rightarrow (x, \alpha)\;\;\implies\;\;(x, \alpha)\in\limsup_{\lambda}E(f_{\lambda}).$$

			4. Let $(x, \alpha)\in\liminf_{\lambda}E(f_{\lambda})$ and $\varepsilon>0$. For any $\lambda_{\mu}\rightarrow+\infty$,
			$$\exists \;(x_{\lambda_{\mu}}, \alpha_{\lambda_{\mu}})\in E(f_{\lambda_{\mu}});\;\;\mbox{ such that }\;\; (x_{\lambda_{\mu}}, \alpha_{\lambda_{\mu}})\rightarrow (x, \alpha).$$
			In particular, $\alpha_{\lambda_{\mu}}\rightarrow\alpha$ implies the existence of $\mu_0\in\Lambda$ such that $\alpha_{\lambda_{\mu}}<\alpha+\varepsilon$ if $\mu\geq \mu_0$ and hence
			$$(x_{\lambda_{\mu}}, \alpha_{\lambda_{\mu}})\in E(f_{\lambda_{\mu}})\;\;\implies\;\; x_{\lambda_{\mu}}\in L_{\alpha_{\lambda_{\mu}}}f\subset L_{\alpha+\varepsilon}f_{\lambda_{\mu}}, \;\mbox{ if }\; \mu\geq\mu_0,$$
			showing that $x\in\liminf_{\lambda}L_{\alpha+\varepsilon}f$ and finishing the proof.			
		\end{proof}
		
		\bigskip
		
		Now we can state and prove our main result concerning the level and the epigraph convergence.

	\begin{theorem}
		Let $(f_{\lambda})_{\lambda\in\Lambda}\subset \FC(X)$ be a net and $f\in \FC(X)$. Then,
		$$f_{\lambda}\xrightarrow{\text{L}} f\;\;\implies f_{\lambda}\xrightarrow{\text{E}} f.$$
		Reciprocally, if $f$ is level continuous, then
		$$f_{\lambda}\xrightarrow{\text{E}} f\;\;\implies f_{\lambda}\xrightarrow{\text{L}} f.$$
	\end{theorem}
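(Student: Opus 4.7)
The plan is to exploit the characterization $\overline{E(f)} = \bigcup_{\alpha>0}\left(\limsup_{\beta\to\alpha} L_\beta f\right)\times\{\alpha\}$ from Proposition \ref{Epi&level}(1) together with the four-way dictionary between level-set and epigraph limits provided by Lemma \ref{closure}. Recall that level continuity of $f$ amounts to $\overline{L_\alpha f} = \overline{\ell_\alpha f} = \liminf_{\beta\to\alpha} L_\beta f = \limsup_{\beta\to\alpha} L_\beta f$ for every $\alpha>0$ (using Proposition \ref{limits}), and this equality is the tool that turns the intrinsically asymmetric relation between $L_\alpha f$ and $\ell_\alpha f$ into a usable identity.

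For the forward implication, assume $L_\alpha f_\lambda \to \overline{L_\alpha f}$ for every $\alpha>0$ and split $E$-convergence into $\limsup_\lambda E(f_\lambda) \subset \overline{E(f)}$ and $\overline{E(f)} \subset \liminf_\lambda E(f_\lambda)$. For the first, a point $(x,\alpha)$ on the left yields via Lemma \ref{closure}(3) that $x \in \limsup_\lambda L_{\alpha+\varepsilon} f_\lambda = \overline{L_{\alpha+\varepsilon} f}$ for every $\varepsilon>0$; intersecting over $\varepsilon$ and invoking Proposition \ref{limits} places $x$ in $\bigcap_{\delta>0}\overline{\ell_{\alpha+\delta}f} = \limsup_{\beta\to\alpha} L_\beta f$, so $(x,\alpha) \in \overline{E(f)}$. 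For the second, $(x,\alpha) \in \overline{E(f)}$ gives $x \in \overline{L_{\alpha+\varepsilon} f}$ for each $\varepsilon>0$; the hypothesis combined with Lemma \ref{closure}(2) then puts $(x,\alpha+\varepsilon)$ into the closed set $\liminf_\lambda E(f_\lambda)$, and sending $\varepsilon\to 0^+$ finishes the inclusion.

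For the converse, assuming $f$ is level continuous and $f_\lambda \xrightarrow{\text{E}} f$, the easy inclusion $\limsup_\lambda L_\alpha f_\lambda \subset \overline{L_\alpha f}$ follows from Lemma \ref{closure}(1): if $x$ lies in the left-hand side then $(x,\alpha) \in \limsup_\lambda E(f_\lambda) = \overline{E(f)}$, hence $x \in \limsup_{\beta\to\alpha} L_\beta f = \overline{L_\alpha f}$ by level continuity. The reverse inclusion $\overline{L_\alpha f} \subset \liminf_\lambda L_\alpha f_\lambda$ is the main obstacle, since Lemma \ref{closure}(4) only yields information about $L_{\alpha+\varepsilon} f_\lambda$ for $\varepsilon>0$, not for $\varepsilon=0$. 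My plan is to circumvent this by rewriting $\overline{L_\alpha f} = \overline{\ell_\alpha f}$ (again using level continuity) and showing $\ell_\alpha f \subset \liminf_\lambda L_\alpha f_\lambda$ directly: given $y \in \ell_\alpha f$, pick $\beta \in (f(y),\alpha)$ so that $(y,\beta) \in E(f) \subset \overline{E(f)} = \liminf_\lambda E(f_\lambda)$; any approximating net $(y_\lambda,\beta_\lambda) \in E(f_\lambda)$ converging to $(y,\beta)$ eventually satisfies $\beta_\lambda < \alpha$, so $y_\lambda \in L_\alpha f_\lambda$, whence $y \in \liminf_\lambda L_\alpha f_\lambda$; closedness of the liminf then delivers the full inclusion after closure.

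The main obstacle is exactly this last step: $E$-convergence is intrinsically coarser than $L$-convergence because the epigraph does not separate $<\alpha$ from $\leq\alpha$, and level continuity is precisely what is needed to recover the lost information, by letting one slide $\beta$ strictly downward from $\alpha$ within $\ell_\alpha f$ while remaining inside $E(f)$.
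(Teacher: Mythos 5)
Your proof is correct and follows essentially the same route as the paper: the same decomposition into the four set inclusions, the same use of Lemma \ref{closure} together with Propositions \ref{limits} and \ref{Epi&level}, and the same key step of passing through $\ell_\alpha f$ via level continuity in the converse. The only cosmetic differences are that in the forward direction you handle $\overline{E(f)}\subset\liminf_\lambda E(f_\lambda)$ by an $\varepsilon$-shift in the second coordinate rather than by first proving the inclusion for $E(f)$ and then taking closures, and in the converse you re-derive the content of Lemma \ref{closure}(4) inline instead of citing it.
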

	
	\begin{proof}
		We have to show that
		$$f_{\lambda}\xrightarrow{\text{L}} f\;\;\;\implies\;\;\; \limsup_{\lambda} E(f_{\lambda})\subset \overline{E(f)}\subset\liminf_{\lambda} E(f_{\lambda}).$$
		However, by Lemma \ref{closure} 
		$$(x, \alpha)\in \limsup_{\lambda} E(f_{\lambda})\;\;\implies\;\; x\in\limsup_{\lambda} L_{\alpha+\varepsilon}f_{\lambda},\;\;\forall\varepsilon>0,$$
		and hence 
		$$f_{\lambda}\xrightarrow{L} f\;\;\;\implies \;\;\;x\in\overline{L_{\alpha+\varepsilon}f}\subset \overline{\ell_{\alpha+2\varepsilon}f}, \;\;\forall\varepsilon>0.$$
		By Propositions \ref{limits} and \ref{Epi&level} we conclude that 
		$$x\in \bigcap_{\varepsilon>0}\overline{l_{\alpha+2\varepsilon}f}=\limsup_{\beta\rightarrow\alpha}L_{\beta}f\;\;\implies\;\;(x, \alpha)\in\left(\limsup_{\beta\rightarrow\alpha}L_{\beta}f\times\{\alpha\}\right)\subset \overline{E(f)},$$
		thus $\limsup_{\lambda} E(f_{\lambda})\subset \overline{E(f)}$.
		
		Let us consider now $(x, \alpha)\in E(f)$. Then, $x\in L_{\alpha}f$ and the assumption $f_{\lambda}\xrightarrow{L}f$ together with Lemma \ref{closure} imply 
		$$x\in L_{\alpha}f\subset\overline{L_{\alpha}f}=\liminf_{\lambda}L_{\alpha}f_{\lambda}\;\;\;\implies (x, \alpha)\in \liminf_{\lambda}E(f_{\lambda}).$$
		Therefore, $E(f)\subset \liminf_{\lambda}E(f_{\lambda})$, so $\overline{E(f)}\subset \liminf_{\lambda}E(f_{\lambda})$ and then $f_{\lambda}\xrightarrow{E}f$.
		
		\bigskip
		
		Let us now assume that $f_{\lambda}\xrightarrow{E}f$ with $f$ is level continuous. By definition,
		$$f_{\lambda}\xrightarrow{L}f\;\;\;\iff\;\;\; \forall\alpha>0, \;\;\;\limsup_{\lambda}L_{\alpha}f_{\lambda}\subset \overline{L_{\alpha}f}\subset\liminf_{\lambda}L_{\alpha}f_{\lambda}.$$ 
		
		Let $\alpha>0$ and $x\in \limsup_{\lambda}L_{\alpha}f_{\lambda}$. Since we are assuming that $f_{\lambda}\xrightarrow{E}f$, we have by Lemma \ref{closure} and Proposition \ref{Epi&level} that, 
		$$(x, \alpha)\in \limsup_{\lambda}E(f_{\lambda})=\overline{E(f)}=\bigcup_{\alpha>0}\left(\limsup_{\beta\rightarrow\alpha}L_{\beta}f\times\{\alpha\}\right).$$
		Therefore, $x\in \limsup_{\beta\rightarrow\alpha}L_{\beta}f$ and since we are assuming that $f$ is level continuous,
		$$x\in\limsup_{\beta\rightarrow\alpha}L_{\beta}f=\overline{L_{\alpha}f}\;\;\;\mbox{ which implies }\;\;\;\limsup_{\lambda}L_{\alpha}f_{\lambda}\subset \overline{L_{\alpha}f}.$$
		
		Consider now $x\in \ell_{\alpha}f$ and define $\alpha_0:=f(x)<\alpha$, then $(x, \alpha_0)\in E(f)$. Since we are assuming $f_{\lambda}\xrightarrow{E}f$, we get from Lemma \ref{closure} that
		$$(x, \alpha_0)\in E(f)\subset\liminf_{\lambda}E(f_{\lambda})\;\;\implies\;\;\forall\varepsilon>0, \;\; x\in \liminf_{\lambda}L_{\alpha_0+\varepsilon}f.$$
		Hence, for $\varepsilon>0$ small enough, we obtain
		$$L_{\alpha_0+\varepsilon}f_{\lambda}\subset L_{\alpha}f_{\lambda}\;\;\;\implies\;\;\; x\in\liminf_{\lambda}L_{\alpha_0+\varepsilon}f\subset \liminf_{\lambda}L_{\alpha}f_{\lambda}\implies \ell_{\alpha}f\subset \liminf_{\lambda}L_{\alpha}f_{\lambda}.$$ 
		On the other hand, we are assuming that $f$ is level-continuous, in particular $\overline{L_{\alpha}f}=\overline{\ell_{\alpha}f}$ and consequently
		$$\overline{L_{\alpha}f}=\overline{\ell_{\alpha}f}\subset \liminf_{\lambda}L_{\alpha}f_{\lambda}$$
		which implies that $f_{\lambda}\xrightarrow{L}f$ ending the proof.
	\end{proof}
	
\bigskip

Next we define monotone increasing nets.
	
	\begin{definition}
		We say that a net $(f_{\lambda})_{\lambda\in\Lambda}\subset\FC(X)$ is {\it monotone increasing} if the net $(E(f_{\lambda}))_{\lambda\in\Lambda}$ is monotone increasing.
	\end{definition}
	
	The next lemma states the main properties of monotone increasing nets.
	
	\begin{lemma}
		\label{monotone}
		For any net $(f_{\lambda})_{\lambda\in\Lambda}\subset\FC(X)$ it holds:
		\begin{itemize}
			\item[1.] $(f_{\lambda})_{\lambda\in\Lambda}$ is monotone increasing iff $(L_{\alpha}f_{\lambda})_{\lambda\in\Lambda}$ is monotone increasing for all $\alpha>0$.
			\item[2.] If $(f_{\lambda})_{\lambda\in\Lambda}$ is monotone increasing then, for all $x\in X$ the net
			$(f_{\lambda}(x))_{\lambda\in\Lambda}$ is monotone decreasing.
		\end{itemize}
	\end{lemma}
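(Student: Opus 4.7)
The lemma is essentially a bookkeeping exercise relating the set-theoretic inclusions $E(f_\lambda)\subset E(f_\mu)$ to inclusions of sublevel sets and to pointwise inequalities, so the plan is to unwind the relevant definitions carefully and check the equivalence.

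For item 1, the plan is to exploit the characterization $(x,\alpha)\in E(f_\lambda) \iff x\in L_\alpha f_\lambda$ coming directly from the definition of the epigraph $E(f)$. Assuming $(f_\lambda)$ is monotone increasing, fix $\alpha>0$ and $\lambda\leq \mu$; for $x\in L_\alpha f_\lambda$ the pair $(x,\alpha)$ lies in $E(f_\lambda)\subset E(f_\mu)$, hence $x\in L_\alpha f_\mu$. Conversely, if every sublevel net $(L_\alpha f_\lambda)_{\lambda\in\Lambda}$ is monotone increasing, then for $(x,\alpha)\in E(f_\lambda)$ we have $x\in L_\alpha f_\lambda\subset L_\alpha f_\mu$, so $(x,\alpha)\in E(f_\mu)$, proving $E(f_\lambda)\subset E(f_\mu)$.

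For item 2, the idea is to take $x\in X$ and $\lambda\leq\mu$ and use item 1 at the level $\alpha:=f_\lambda(x)$. If $\alpha\in(0,+\infty)$, then $x\in L_\alpha f_\lambda\subset L_\alpha f_\mu$, so $f_\mu(x)\leq\alpha=f_\lambda(x)$. The boundary cases are handled routinely: if $f_\lambda(x)=+\infty$ the inequality $f_\mu(x)\leq f_\lambda(x)$ is automatic, while if $f_\lambda(x)=0$, then $x\in L_\alpha f_\lambda\subset L_\alpha f_\mu$ for every $\alpha>0$, which forces $f_\mu(x)=0=f_\lambda(x)$.

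No step presents a real obstacle here; the only thing to be a bit careful about is that $\alpha$ must be strictly positive in the definition of $E(f)$, which is why the case $f_\lambda(x)=0$ in item 2 needs to be addressed separately rather than by a direct choice of $\alpha$. Everything else follows immediately from the definitions stated earlier in the paper.
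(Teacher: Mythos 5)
Your proof is correct and follows essentially the same route as the paper: both items are proved by unwinding the equivalence $(x,\alpha)\in E(f_\lambda)\iff x\in L_\alpha f_\lambda$, and item 2 by evaluating at $\alpha=f_\lambda(x)$. In fact you are slightly more careful than the paper, which writes $(x,f_\lambda(x))\in E(f_\lambda)$ without addressing the cases $f_\lambda(x)=0$ or $f_\lambda(x)=+\infty$ that fall outside $X\times(0,+\infty)$; your separate treatment of these is a minor but genuine improvement.
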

	
	\begin{proof}
		1. In this case, 	 
		$$\forall\alpha>0, \;\;\; x\in L_{\alpha}f_{\lambda}\;\;\implies\;\; (x, \alpha)\in E(f_{\lambda})\subset E(f_{\mu})  \;\;\implies\;\; x\in L_{\alpha}f_{\mu}.$$
		Reciprocally, if $\forall\alpha>0,\;\;\; L_{\alpha}f_{\lambda}\subset L_{\alpha}f_{\mu}$ then
		$$(x, \alpha)\in E(f_{\lambda})\;\;\implies\;\; x\in L_{\alpha}f_{\lambda}\subset L_{\alpha}f_{\mu}\;\;\implies\;\; (x, \alpha)\in E(f_{\mu}).$$
		
		2. Let $x\in X$ and $\lambda\leq\mu$. Then,  
		$$(x, f_{\lambda}(x))\in E(f_{\lambda})\subset E(f_{\mu})\;\;\implies\;\; f_{\mu}(x)\leq f_{\lambda}(x)\;\;\implies\;\; (f_{\lambda}(x))_{\lambda\in\Lambda}\;\;\mbox{ is decreasing.}$$
	\end{proof} 
	
	The next theorem shows that monotone increasing nets are $E$-convergents.
	
	\begin{theorem}
		Any monotone increasing net has an $E$-limit in $\FC(X)$.
	\end{theorem}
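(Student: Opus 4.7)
The plan is to exhibit an explicit candidate for the $E$-limit by taking a pointwise infimum of the net. Since the net $(f_{\lambda})_{\lambda\in\Lambda}$ is monotone increasing, Lemma \ref{monotone}(2) tells me that for each fixed $x\in X$, the net of real numbers $(f_{\lambda}(x))_{\lambda\in\Lambda}$ is monotone decreasing, so it has an infimum in $[0,+\infty]$. I would define
$$f:X\rightarrow[0,+\infty],\qquad f(x):=\inf_{\lambda\in\Lambda}f_{\lambda}(x).$$

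First I would verify that $f\in\FC(X)$, that is, $\ell_{\alpha}f\neq\emptyset$ for every $\alpha>0$. Since each $f_{\lambda}$ is a level function, there is some $x_{\lambda}\in X$ with $f_{\lambda}(x_{\lambda})<\alpha$; then $f(x_{\lambda})\leq f_{\lambda}(x_{\lambda})<\alpha$, so $x_{\lambda}\in\ell_{\alpha}f$. Next, since $(E(f_{\lambda}))_{\lambda\in\Lambda}$ is monotone increasing, the proposition on limits of monotone nets from Section 2 gives $\lim_{\lambda}E(f_{\lambda})=\overline{\bigcup_{\lambda\in\Lambda}E(f_{\lambda})}$, so the real content is the identification
$$\overline{E(f)}=\overline{\bigcup_{\lambda\in\Lambda}E(f_{\lambda})}.$$

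For the inclusion $\supset$, each $f_{\lambda}\geq f$ pointwise (by definition of infimum), hence $E(f_{\lambda})\subset E(f)$ for every $\lambda$, giving $\bigcup_{\lambda}E(f_{\lambda})\subset E(f)\subset\overline{E(f)}$. For the reverse inclusion, let $(x,\alpha)\in E(f)$, i.e. $\inf_{\lambda}f_{\lambda}(x)\leq\alpha$. For every $\varepsilon>0$ there exists $\lambda_{\varepsilon}$ with $f_{\lambda_{\varepsilon}}(x)<\alpha+\varepsilon$, so $(x,\alpha+\varepsilon)\in E(f_{\lambda_{\varepsilon}})\subset\bigcup_{\lambda}E(f_{\lambda})$; letting $\varepsilon\searrow 0$ produces a net converging to $(x,\alpha)$ inside the union, proving $E(f)\subset\overline{\bigcup_{\lambda}E(f_{\lambda})}$ and then taking closures.

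I do not expect a major obstacle here: the monotone convergence proposition handles the set-theoretic limit, and Lemma \ref{monotone}(2) makes the candidate $f=\inf_{\lambda}f_{\lambda}$ the natural one. The only delicate point is the boundary case $\inf_{\lambda}f_{\lambda}(x)=\alpha$ with the infimum not attained, where $(x,\alpha)$ may fail to belong to any single $E(f_{\lambda})$; but the $\varepsilon$-perturbation above places it in the closure, which is exactly what the $E$-convergence asks for.
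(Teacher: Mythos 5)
Your proposal is correct and follows essentially the same route as the paper: both take $f(x):=\inf_{\lambda}f_{\lambda}(x)$ as the candidate limit, invoke the monotone-net proposition to reduce the claim to $\overline{E(f)}=\overline{\bigcup_{\lambda}E(f_{\lambda})}$, and establish the nontrivial inclusion by perturbing $\alpha$ upward so that $(x,\alpha)$ is approximated by points of some $E(f_{\lambda})$. Your explicit verification that $f\in\FC(X)$ is a small addition the paper merely asserts, but the argument is otherwise the same.
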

	
	\begin{proof} By Lemma \ref{monotone} we have that $(f_{\lambda}(x))_{\lambda\in\Lambda}$ is a monotone decreasing sequence. By the Monotone Converge Theorem for real sequences, we obtain that $(f_{\lambda}(x))_{\lambda\in\Lambda}$ converges to $f(x):=\inf_{\lambda\in\Lambda}f_{\lambda}(x)$. Moreover, 
		$$f_{\lambda}\in\FC(X), \;\;\lambda\in\Lambda\;\;\implies\;\; f\in\FC(X).$$ 
		Also,  
		$$(f_{\lambda})_{\lambda\in\Lambda}\;\;\mbox{ increasing }\;\;\implies \;\;\lim_{\lambda}E(f_{\lambda})=\overline{\bigcup_{\lambda\in\Lambda} E(f_{\lambda})}\;\;\mbox{ and hence }\;\;f_{\lambda}\xrightarrow{E} f\;\;\iff\;\; \overline{E(f)}=\overline{\bigcup_{\lambda\in \Lambda}E(f_{\lambda})}.$$
		If $(x, \alpha)\in E(f_{\lambda})$ for some $\lambda\in\Lambda$ then $f(x)\leq f_{\lambda}(x)\leq\alpha$ and hence 
		$$\overline{\bigcup_{\lambda\in\Lambda}E(f_{\lambda})}\subset \overline{E(f)}.$$
		On the other hand, let $(x, \alpha)\in E(f)$. For any $U\in \VC_{(x, \alpha)}$ there exists $\alpha_U>\alpha$ such that $(x, \alpha_U)\in U$. Hence,
		$$f(x)\leq\alpha<\alpha_U\;\;\implies\;\;\exists \lambda\in\Lambda; \;\; f_{\lambda}(x)<\alpha_U \;\;\implies\;\;(x, \alpha_U)\in E(f_{\lambda})\subset \bigcup_{\lambda\in\Lambda}E(f_{\lambda}).$$
		Hence, the net $\bigl((x, \alpha_U)\bigr)_{U\in\VC_{(x, \alpha)}}$ is contained in $\bigcup_{\lambda\in\Lambda}E(f_{\lambda})$ and $(x, \alpha_U)\rightarrow (x, \alpha)$ implying that 
		$$E(f)\subset \overline{\bigcup_{\lambda\in\Lambda}E(f_{\lambda})},$$
		and concluding the proof.		
	\end{proof}
	
	\begin{remark}
		An analogous definition of monotone decreasing nets is possible. However, there is no way to assure that a monotone decreasing net has a limit in $\FC(X)$.
	\end{remark}

	\section{Robust functions and topological groups}

This section is devoted to the study of robust functions on topological groups. Robust functions appears in optimization problems and therefore their understanding is desired (see for instance \cite{zheng, zheng1, zheng2}). Our aim here is to prove that any level function on a topological group is in fact the limit of robust function in both, level and epigraph convergence.

\begin{definition} 
	A subset $A\subseteq X$ is said to be robust iff $\overline{A}=\overline{\inner A}$.
\end{definition}

We define the class of {\it $L$-robust functions} of $\FC(X)$ as
$$\RC_L(X):=\{f\in\FC(X); \;\ell_{\alpha}f\;\mbox{ is robust }\;\forall\alpha>0\}.$$
and 
the class of {\it $E$-robust functions} of $\FC(X)$ as
$$\RC_E(X):=\{f\in\FC(X); \;e(f)\;\mbox{ is robust}\}.$$

By Proposition \ref{Epi&level} it holds that $\RC_L(X)\subset \RC_E(X)$.

\subsection{Topological groups}

Let $G$ be a topological Hausdorff group. For any $x\in G$, the {\it right-translation} by $x$ is the map 
$$R_x:G\rightarrow G, \;\;\;y\in G\mapsto yx\in G.$$
Is a standard fact that $R_x$ is a homeomorphism of $G$ with inverse given by $R_{x^{-1}}$, where $x^{-1}$ is the unique element in $G$ such that $xx^{-1}=x^{-1}x=e$, with $e\in G$ the identity element.

For any given nonempty subsets $A,B\subseteq G$ we define the set 
	$$AB=\{ab\;/\;a\in A,\;b\in B\}.$$

			\begin{proposition}
				\label{proposition}
				Let $A$ and $B$ be two nonempty subsets of $G$. It holds:
				\begin{itemize}
					\item[1.] If $A$ is open, then $AB$ is open;
					\item[2.] If $\inner A\neq \emptyset$, then $\left(\inner A\right)B\subseteq \inner AB$;
					\item[3.] If $A$ is robust, then $AB$ is robust.
					\end{itemize}
					\end{proposition}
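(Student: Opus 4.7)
The plan is to handle the three items in order, with each building on the previous. The unifying idea is that $AB = \bigcup_{b\in B} Ab = \bigcup_{b\in B} R_b(A)$ is a union of right-translates of $A$, and since every $R_b$ is a homeomorphism, topological properties that behave well under homeomorphism and arbitrary union transfer immediately from $A$ to $AB$.

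For item 1, I would simply observe that if $A$ is open, then each $R_b(A) = Ab$ is open (because $R_b$ is a homeomorphism), and $AB$ is the union of these sets, hence open.

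For item 2, apply item 1 to the open set $\inner A$: the product $(\inner A)B$ is open, and it is clearly contained in $AB$. Since the interior of $AB$ is the largest open set inside $AB$, we get $(\inner A)B \subseteq \inner(AB)$.

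The heart of the proof is item 3. Assume $\overline{A}=\overline{\inner A}$. The inclusion $\overline{\inner(AB)}\subseteq \overline{AB}$ is immediate. For the reverse inclusion, I would first establish the auxiliary fact
\[
\overline{C}\,B \subseteq \overline{CB}
\]
for any subsets $C,B\subseteq G$: given $c\in \overline{C}$ and $b\in B$, take a net $c_\lambda\in C$ with $c_\lambda\to c$, and use continuity of $R_b$ to deduce $c_\lambda b\to cb$, so $cb\in\overline{CB}$. Applying this with $C=\inner A$ and using robustness of $A$,
\[
A \subseteq \overline{A} = \overline{\inner A} \;\;\Longrightarrow\;\; AB \subseteq \overline{\inner A}\cdot B \subseteq \overline{(\inner A)B}.
\]
Finally, by item 2, $(\inner A)B\subseteq \inner(AB)$, so $\overline{(\inner A)B}\subseteq \overline{\inner(AB)}$, and combining these gives $AB\subseteq \overline{\inner(AB)}$, hence $\overline{AB}\subseteq \overline{\inner(AB)}$.

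I do not expect a real obstacle here; the only subtlety is resisting the temptation to take closure of a product directly. The clean way is the lemma $\overline{C}B\subseteq \overline{CB}$, which is a net/continuity argument using only that right translations are continuous, not that the group operation is jointly continuous (so one does not need any topological-group axiom beyond continuity of translations on the second factor). Everything else is routine bookkeeping with interiors and closures.
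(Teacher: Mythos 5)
Your proof is correct and follows essentially the same route as the paper: items 1 and 2 are identical, and for item 3 both arguments rest on the single fact that right translations are homeomorphisms, so that robustness of $A$ transfers to $AB$ via $\overline{\inner A}\,B\subseteq\overline{(\inner A)B}\subseteq\overline{\inner(AB)}$. The only cosmetic difference is that you package the translation step as the lemma $\overline{C}B\subseteq\overline{CB}$ and take closures at the end, whereas the paper argues pointwise with a neighborhood $U$ of $x\in\overline{AB}$ and translates it to the neighborhood $Ub^{-1}$ of $a$; the content is the same.
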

					
					\begin{proof}
						1. In fact, since
						$$AB=\bigskip \bigcup_{b\in B}Ab=\bigcup_{b\in B}R_b(A)$$
						and right translations are homeomorphisms we have that if $A$ is open then $AB$ is open.
						
						2. Follows directly from the previous equality.
						
						3. We only have to show that $\overline{AB}\subset \overline{\inner AB}$ since the opposite inclusion always holds. Let then $x\in \overline{AB}$ and consider a neighborhood $U$ of $x$. By definition, 
						$$U\cap AB\neq\emptyset\;\;\implies\;\; ab\in U, \;\mbox{ for some }\;a\in A, b\in B.$$
						In particular $a\in Ub^{-1}$ and hence $Ub^{-1}$ is a neighborhood of $a$, since translations are homeomorphisms. By the assumption that $A$ is robust, we have that  $$\exists a'\in\inner A\;\mbox{ with }a'\in Ub^{-1}\;\;\implies\;\;a'b\in U\;\;\implies\;\;\left(\inner A\right)B\cap U\neq\emptyset,$$
						and by item 2. we conclude that $\inner AB\cap U\neq\emptyset$ and hence $x\in \overline{\inner AB}$ concluding the proof.
						\end{proof}

\subsection{$L$-robust functions on topological groups}

In this section we show that on topological groups, any function in $\FC(G)$ is the $L$-limit of some net $(f_{\lambda})_{\lambda\in\Lambda}\in\RC_L(G)$.

	Let $f, g\in\FC(G)$. The {\it L-convolution} of $f$ and $g$ is the function $f*_Lg\in\FC(G)$ given by 
	$$f*_Lg(x):=\inf_{y\in G}\{\max\{f(xy^{-1}), g(y)\}\}$$

	\begin{lemma}
		\label{product}
		For all $f, g \in\FC(G)$ and $\alpha>0$ it holds that
		\begin{itemize}
			\item[1.] $\ell_{\alpha}(f*_Lg)=\ell_{\alpha}f \, \ell_{\alpha}g$;
			\item[2.] $L_{\alpha}f\, L_{\alpha}g\subset L_{\alpha}(f*_Lg);$
			\item[3.] If there exists $\varepsilon_0>0$ such that $\ell_{\alpha+\varepsilon}g$ is open for all $\varepsilon\in(0, \varepsilon_0)$ then 
			$$L_{\alpha}(f*_Lg)\setminus\MC_{\alpha}(f)\subset \bigcap_{\varepsilon\in(0, \varepsilon_0)}\left(\ell_{\alpha}f\,\ell_{\alpha+\varepsilon}g\right).$$
		\end{itemize} 
	\end{lemma}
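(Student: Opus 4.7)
The plan is to reduce each of the three assertions to a pointwise computation using the defining formula of the convolution. The basic observation driving everything is that for any decomposition $x=uv$ in $G$, setting $y:=v$ in $\inf_{y}\max\{f(xy^{-1}),g(y)\}$ yields $\max\{f(u),g(v)\}$; this dictionary between group products and infima translates set-theoretic statements about level sets of $f*_Lg$ directly into pointwise statements about $f$ and $g$.

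For item 1, I would unpack the definition: $x\in\ell_{\alpha}(f*_Lg)$ is equivalent (since an infimum is strictly below $\alpha$ iff some element is) to the existence of $y\in G$ with $f(xy^{-1})<\alpha$ and $g(y)<\alpha$, which rewrites as $x=(xy^{-1})\cdot y\in\ell_{\alpha}f\cdot\ell_{\alpha}g$. For item 2 the same calculation with weak inequality $\leq\alpha$ produces the stated inclusion; the reverse inclusion fails in general because $\inf$ preserves $\leq$ but may convert $<$ into $\leq$, so we cannot pass from $L_{\alpha}(f*_Lg)$ back to an open level product.

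For item 3, I would fix $x\in L_{\alpha}(f*_Lg)\setminus\MC_{\alpha}(f)$ and $\varepsilon\in(0,\varepsilon_0)$, and extract for each $n\in\N$ an element $y_n\in G$ with $f(xy_n^{-1})<\alpha+1/n$ and $g(y_n)<\alpha+1/n$. Setting $u_n:=xy_n^{-1}$ and $v_n:=y_n$, one has $x=u_nv_n$, $u_n\in\ell_{\alpha+1/n}f$, and for $n$ large enough $v_n\in\ell_{\alpha+\varepsilon}g$. If $u_n\in\ell_{\alpha}f$ for some such $n$ the decomposition already sits in $\ell_{\alpha}f\cdot\ell_{\alpha+\varepsilon}g$ and we are done; otherwise $f(u_n)\in[\alpha,\alpha+1/n)$, and here the openness of $\ell_{\alpha+\varepsilon}g$ enters decisively. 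The set $W_n:=\{u\in G:\,u^{-1}x\in\ell_{\alpha+\varepsilon}g\}=x\cdot(\ell_{\alpha+\varepsilon}g)^{-1}$ is open (inversion and left-translation are homeomorphisms) and contains $u_n$, so any point $u\in W_n\cap\ell_{\alpha}f$ would yield the required factorization $x=u\cdot(u^{-1}x)\in\ell_{\alpha}f\cdot\ell_{\alpha+\varepsilon}g$.

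The heart of item 3 is thus to ensure $W_n\cap\ell_{\alpha}f\neq\emptyset$, equivalently $u_n\in\overline{\ell_{\alpha}f}=\liminf_{\beta\to\alpha}L_{\beta}f$ (via Proposition \ref{limits}), and this is where the hypothesis $x\notin\MC_{\alpha}(f)$ must be brought in. I expect this to be the main obstacle, because the hypothesis is stated at $x$ whereas the approximation is needed at the auxiliary point $u_n=xy_n^{-1}$. The delicate step will be to transfer the non-generalized-minimum property from $x$ to (a cluster point of) the sequence $(u_n)$, exploiting the Hausdorff topology of $G$, continuity of the group operations, and the characterizations of $\limsup_{\beta\to\alpha}L_{\beta}f$ and $\liminf_{\beta\to\alpha}L_{\beta}f$ supplied by Proposition \ref{limits}.
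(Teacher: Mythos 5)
Items 1 and 2 of your proposal coincide with the paper's argument and are fine. The problem is item 3, where you stop short of an actual proof. After constructing the approximate factorizations $x=u_nv_n$ with $u_n\in\ell_{\alpha+1/n}f$, $v_n\in\ell_{\alpha+\varepsilon}g$, and the open set $W:=x\left(\ell_{\alpha+\varepsilon}g\right)^{-1}$, you correctly reduce the claim to showing $W\cap\ell_{\alpha}f\neq\emptyset$, but then merely announce that the ``delicate step'' of bringing in the hypothesis $x\notin\MC_{\alpha}(f)$ ``will be'' to transfer the non-generalized-minimum property from $x$ to (a cluster point of) the $u_n$. That step is never carried out, and it is precisely the content of the statement: what you have actually established is only that $x\in\ell_{\alpha+\delta}f\,\ell_{\alpha+\varepsilon}g$ for every $\delta>0$, which makes no use of $\MC_{\alpha}(f)$ at all. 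A small further slip: your parenthetical ``equivalently $u_n\in\overline{\ell_{\alpha}f}$'' is not an equivalence; since $W$ is open and contains $u_n$, membership $u_n\in\overline{\ell_{\alpha}f}$ implies $W\cap\ell_{\alpha}f\neq\emptyset$, but not conversely.

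The direction you propose for closing the gap also diverges from how the argument is meant to close. In the paper the hypothesis $x\notin\MC_{\alpha}(f)$ is applied at $x$ itself, not at the auxiliary points: $W=x\left(\ell_{\alpha+\varepsilon}g\right)^{-1}$ is treated as a \emph{neighborhood of $x$} (this uses $e\in\ell_{\alpha+\varepsilon}g$, which the paper takes as part of the setup and which holds in the intended application $g=g_U$), one records that this single neighborhood meets $\ell_{\alpha+\delta}f$ for every $\delta>0$, and then the exclusion of the generalized-minimum alternative — i.e.\ either $x\in\liminf_{\beta\rightarrow\alpha}L_{\beta}f=\overline{\ell_{\alpha}f}$, in which case $W\in\VC_x$ meets $\ell_{\alpha}f$ at once, or $x\notin\limsup_{\beta\rightarrow\alpha}L_{\beta}f$ — is what yields $W\cap\ell_{\alpha}f\neq\emptyset$. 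No transfer of the hypothesis to $u_n$ is involved, and attempting to prove $u_n\in\overline{\ell_{\alpha}f}$ looks hopeless, since nothing constrains $f$ near $u_n$ beyond $f(u_n)<\alpha+1/n$. Until the implication from $x\notin\MC_{\alpha}(f)$ to $W\cap\ell_{\alpha}f\neq\emptyset$ is actually derived, item 3 remains unproved.
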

	
	\begin{proof}
		1. Let $x\in \ell_{\alpha}(f*g)$. Then,
		$$f*g(x)<\alpha\;\;\implies \;\;\inf_{y\in G}\{\max\{f(xy^{-1}), g(y)\}\}<\alpha\;\;\implies\;\;\exists y\in G; \;\;\max\{f(xy^{-1}), g(y)\}<\alpha$$
		$$\implies\;\; \exists y\in G; \;\;f(xy^{-1})>\alpha\;\mbox{ and }\; g(y)<\alpha\;\;\implies \;\;xy^{-1}\in \ell_{\alpha}f\;\mbox{ and }\;y\in \ell_{\alpha}g\;\;\implies\;\; x=(xy^{-1})y\in \ell_{\alpha}f \ell_{\alpha},g$$
		which proves that $\ell_{\alpha}(f*_Lg)\subset\ell_{\alpha}f \,\ell_{\alpha}g$.
		
		Reciprocally, if $a\in \ell_{\alpha}f$ and $b\in \ell_{\alpha}g$  then 
		$$\max\left\{f\left((ab)b^{-1}\right), g(b)\right\}<\alpha\;\;\implies \;\;f*g(ab)=\inf_{y\in G}\left\{\max\left\{f\left((ab)y^{-1}\right), g(y)\right\}\right\}<\alpha,$$
		showing that $\ell_{\alpha}f \,\ell_{\alpha}g\subset \ell_{\alpha}(f*g)$ and so $\ell_{\alpha}(f*_Lg)=\ell_{\alpha}f \, \ell_{\alpha}g$.
		
		2. It follows analogously from the inclusion $\ell_{\alpha}f \,\ell_{\alpha}g\subset \ell_{\alpha}(f*_Lg)$.
		For any 
		
		3. By definition, for any $\varepsilon>0$ there exists $y\in G$ such that 
		$$f*_Lg(x)+\varepsilon>\max\{f(xy^{-1}), g(y)\}\;\;\implies\;\;f(xy^{-1})<\alpha+\varepsilon\;\;\mbox{ and }\;\;g(y)<\alpha+\varepsilon$$
		$$\implies\;\; xy^{-1}\in \ell_{\alpha+\varepsilon}f\;\;\mbox{ and }\;\; y\in\ell_{\alpha+\varepsilon}g\;\;\iff\;\; x\left(\ell_{\alpha+\varepsilon}g\right)^{-1}\cap\ell_{\alpha+\varepsilon}f\neq\emptyset.$$
		In particular, we get that 
		$$x\left(\ell_{\alpha+\varepsilon}g\right)^{-1}\cap\ell_{\alpha+\delta}f\neq\emptyset,  \;\;\;\forall \varepsilon\in(0, \varepsilon_0)\;\mbox{ and }\;\delta>0.$$
		However, by hypothesis $x\left(\ell_{\alpha+\varepsilon}g\right)^{-1}\in\VC_x$ for any $\varepsilon\in (0, \varepsilon_0)$ and hence
		$$x\in L_{\alpha}(f*_Lg)\setminus\MC_{\alpha}(f)\;\;\implies\;\;x\left(\ell_{\alpha+\varepsilon}g\right)^{-1}\cap \ell_{\alpha}f\neq\emptyset, \;\;\forall\varepsilon\in (0, \varepsilon_0).$$
		Therefore, 
		$$x\in \ell_{\alpha}\ell_{\alpha+\varepsilon}g, \;\;\forall\varepsilon\in(0, \varepsilon_0)\;\;\implies\;\; x\in \bigcap_{\varepsilon\in (0, \varepsilon_0)}\left(\ell_{\alpha}f\,\ell_{\alpha+\varepsilon}g\right),$$
		which ends the proof.		
	\end{proof}
	
	\bigskip
	
	Next we prove that all functions in $\FC(G)$ are $L$-limits of robust functions.
	
	\begin{theorem}
		For any $f\in\FC(G)$ there exists a net $(f_{\lambda})_{\lambda\in\Lambda}\subset\RC_L(G)$ such that 
		$$f_{\lambda}\xrightarrow{L}f.$$
	\end{theorem}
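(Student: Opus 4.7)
My plan is to approximate $f$ by $*_L$-convolutions with bump-like functions concentrated on shrinking neighbourhoods of the identity $e\in G$. Take $\Lambda:=\VC_e$, the collection of open neighbourhoods of $e$, directed by reverse inclusion (so $U\leq V$ iff $V\subset U$). For each $U\in\Lambda$ define $g_U\in\FC(G)$ by $g_U(y):=0$ if $y\in U$ and $g_U(y):=+\infty$ otherwise; then $\ell_{\alpha}g_U=L_{\alpha}g_U=U$ for every $\alpha>0$. Set $f_U:=f*_L g_U$; this will be the approximating net.

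First I would verify that $f_U\in\RC_L(G)$: by Lemma \ref{product}(1), $\ell_{\alpha}f_U=\ell_{\alpha}f\cdot U$, and Proposition \ref{proposition}(1) says this product is open, hence robust.

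Next I would establish the two inclusions needed for $L_{\alpha}f_U\to\overline{L_{\alpha}f}$. The easy inclusion $\overline{L_{\alpha}f}\subset\liminf_U L_{\alpha}f_U$ follows from Lemma \ref{product}(2): since $e\in U=L_{\alpha}g_U$, we get $L_{\alpha}f\subset L_{\alpha}f\cdot U\subset L_{\alpha}f_U$ for every $U$, and the closedness of $\liminf$ then absorbs $\overline{L_{\alpha}f}$. The opposite inclusion $\limsup_U L_{\alpha}f_U\subset\overline{L_{\alpha}f}$ uses Lemma \ref{product}(3): since $\ell_{\alpha+\varepsilon}g_U=U$ is open for every $\varepsilon>0$, that lemma yields, up to a generalized-minimum correction, $L_{\alpha}f_U\subset\ell_{\alpha}f\cdot U$. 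I would then invoke the topological-group identity
\[
\bigcap_{U\in\Lambda}\overline{\ell_{\alpha}f\cdot U}=\overline{\ell_{\alpha}f}\subset\overline{L_{\alpha}f},
\]
whose proof goes as follows: for $x\notin\overline{\ell_{\alpha}f}$, pick an open $W\ni x$ with $W\cap\ell_{\alpha}f=\emptyset$; by continuity of $(v,u)\mapsto vu^{-1}$ at $(x,e)$ one finds open $V\ni x$ and $U\ni e$ with $VU^{-1}\subset W$, so $V\cap(\ell_{\alpha}f\cdot U)=\emptyset$, whence $x\notin\overline{\ell_{\alpha}f\cdot U}$.

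\textbf{Main obstacle.} The delicate step will be absorbing the generalized-minimum correction coming from Lemma \ref{product}(3) into $\overline{L_{\alpha}f}$. Without care, the above analysis only yields a cluster-point containment inside $\limsup_{\beta\to\alpha}L_{\beta}f$, a set that may strictly exceed $\overline{L_{\alpha}f}$. The idea is to exploit the robustness of the open sets $\ell_{\alpha}f\cdot U$ together with the shrinkage $U\to\{e\}$ in $G$ to force any cluster point produced by these corrections to already be a limit of points of $L_{\alpha}f$ itself. Once this absorption step is in place, the two inclusions combine to give Kuratowski convergence $L_{\alpha}f_U\to\overline{L_{\alpha}f}$ for every $\alpha>0$, and hence $f_U\xrightarrow{\text{L}}f$.
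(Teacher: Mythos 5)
Your overall strategy (convolve with indicator functions $g_U$ of shrinking neighbourhoods of $e$, use Lemma \ref{product} for the level sets and Proposition \ref{proposition} for robustness) is the same as the paper's, and the pieces you describe for the robustness of $f_U$ and for the inclusion $\overline{L_{\alpha}f}\subset\liminf_U L_{\alpha}f_U$ are fine; your proof of $\bigcap_U\overline{\ell_{\alpha}f\cdot U}=\overline{\ell_{\alpha}f}$ via continuity of $(v,u)\mapsto vu^{-1}$ is in fact a clean substitute for the paper's net-extraction argument. But the step you flag as the ``main obstacle'' is a genuine gap, and the absorption you hope for is provably impossible. If $x$ is an $\alpha$-generalized minimum of $f$ with $x\notin\overline{L_{\alpha}f}$, then $x\in\bigcap_{\varepsilon>0}\overline{\ell_{\alpha+\varepsilon}f}$, so for \emph{every} $U\in\VC_e$ the neighbourhood $xU^{-1}$ of $x$ meets $\ell_{\alpha+\varepsilon}f$ for all $\varepsilon>0$; since
\[
f*_Lg_U(x)=\inf_{y\in U}f(xy^{-1}),
\]
this gives $f*_Lg_U(x)\leq\alpha$, i.e. $x\in L_{\alpha}(f*_Lg_U)$ for every $U$. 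Hence $x$ lies in $\liminf_U L_{\alpha}(f*_Lg_U)$ (not merely in some $\limsup$ that might be whittled down), while $x\notin\overline{L_{\alpha}f}$. No shrinkage of $U$ and no appeal to robustness of the open sets $\ell_{\alpha}f\cdot U$ can remove such a point: the defect is permanent, exactly as in the paper's Example \ref{ex} and in the example after Definition \ref{min} (the point $x=2$ at level $\alpha=1$). So with $f_U:=f*_Lg_U$ the net does \emph{not} $L$-converge to $f$ in general.

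The paper's fix, which is the one idea missing from your proposal, is to modify the convolution on the set $\MC(f)$ of generalized minima: it defines $f_U(x):=f*_Lg_U(x)$ for $x\notin\MC(f)$ and $f_U(x):=f(x)$ for $x\in\MC(f)$, so that the problematic points are assigned their original (large) values and drop out of $L_{\alpha}f_U$. With that modification, Lemma \ref{product}(3) --- which deliberately controls only $L_{\alpha}(f*_Lg_U)\setminus\MC_{\alpha}(f)$ --- gives the containment in $\ell_{\alpha}f\cdot U$ that your $\limsup$ argument needs, and the rest of your outline goes through. Without it, the theorem as you have set it up is false for any $f$ possessing a generalized minimum outside $\overline{L_{\alpha}f}$.
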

	
	\begin{proof}
		For any $U\in\VC_e$ let us consider the indicator function of $U$ given by
		$$g_U(x):=\left\{\begin{array}{ll}
		0, & \mbox{ if }x\in U\\
		+\infty & \mbox{ if }x\notin U
		\end{array} \right.\;\mbox{ and define }\;f_U(x):=\left\{\begin{array}{ll}
		f*_Lg_U(x), & \mbox{ if }x\notin\MC(f)\\
		f(x),  &  \mbox{ if }x\in\MC(f)
		\end{array}\right..$$
		
		For all $\alpha>0$ it holds that,
		$$x\in\ell_{\alpha}f_U\;\;\iff\;\; f*_Lg(x)<\alpha \;\;\mbox{ or }\;\; f(x)<\alpha\;\;\iff x\in \ell_{\alpha}(f*_Lg_U)\;\;\mbox{ or }\;\; x\in\ell_{\alpha}f.$$
		However, by definition $\ell_{\alpha}g_U=U$ which by Lemma \ref{product} implies that $\ell_{\alpha}(f*_Lg)=\ell_{\alpha}f \,U$ and since $U\in\VC_e$ we obtain that $\ell_{\alpha}f\subset \ell_{\alpha}f\,U.$ By Proposition \ref{proposition} it follows that 
		$$\ell_{\alpha}f_U=\ell_{\alpha}(f*g_U)=\ell_{\alpha}f \,U\;\;\implies\;\;f_U\in\RC_L(G),$$		
		thus we only have to show that $(f_U)_{U\in\VC_e}$ $L$-converges to $f$, that is, 
		$$\limsup_UL_{\alpha}f_U\subset L_{\alpha}f\subset \liminf_{U}L_{\alpha}f_U, \;\;\;\forall\alpha>0.$$
		Let us first note that if $x\in\MC(f)$, by definition $f*g_U(x)=f(x)$ for all $U\in\VC_e$ and hence we only have to show the previous relation for $x\notin\MC(f)$.
		
		Consider $x\in \limsup_UL_{\alpha}f_U\setminus\MC_{\alpha}(f)$. We have, 
		$$\exists\{U_{\lambda}\}\subset\VC_e\;\;\mbox{ and }\;\;x_{\lambda}\in L_{\alpha}f_{U_{\lambda}}\;\;\mbox{ such that }\;\; \bigcap_{\lambda}U_{\lambda}=\{e\}\;\;\mbox{ and }\;\; x_{\lambda}\rightarrow x.$$ 
		If there exists a subnet $x_{\lambda_{\mu}}\in \MC_G(f)$ such that $x_{\lambda_{\mu}}\rightarrow x$, then 
		$$f(x_{\lambda_{\mu}})=f_{U_{\lambda}}(x_{\lambda_{\mu}})\leq\alpha\;\;\implies \;\;x_{\lambda_{\mu}}\in L_{\alpha}f\;\;\implies\;\;x\in\overline{L_{\alpha}f}.$$
		Therefore, we can assume w.l.o.g. that $x_{\lambda}\in L_{\alpha}f_{U_{\lambda}}\setminus\MC_{\alpha}(f)$ for all $\lambda$.
		In this case, the fact that $\ell_{\alpha+\varepsilon}g_{U_{\lambda}}=U_{\lambda}$ for all $\varepsilon>0$ implies by item 3. in Proposition \ref{product} that
		$$L_{\alpha}(f*_Lg_{U_{\lambda}})\setminus\MC_G(f)\subset \bigcap_{\varepsilon>0}\left(\ell_{\alpha}f\,\ell_{\alpha+\varepsilon}g_{U_{\lambda}}\right)=\ell_{\alpha}f\,U_{\lambda}.$$
		Therefore,
		$$\forall\lambda, \exists a_{\lambda}\in \ell_{\alpha}f\;\;\mbox{ and }\;\; b_{\lambda}\in U_{\lambda}; \;\;\mbox{ such that }\;\;x_{\lambda}=a_{\lambda}b_{\lambda}.$$
		However, $b_{\lambda}\in U_{\lambda}$ implies that $b_{\lambda}\rightarrow e$ and hence $a_{\lambda}=xb_{\lambda}^{-1}\rightarrow x$. Since $a_{\lambda}\in \ell_{\alpha}f$ we obtain that $x\in \overline{\ell_{\alpha}f}\subset \overline{L_{\alpha}f}$ implying that 
		$$\limsup_UL_{\alpha}f_U\subset\overline{L_{\alpha}f}.$$
		Consider now $x\in L_{\alpha}f\setminus\MC_{\alpha}(f)$ and a family $\{U_{\lambda}\}\subset\VC_e$ such that $\bigcap_{\lambda}U_{\lambda}=\{e\}$. By choosing $b_{\lambda}\in U_{\lambda}$ we have that $xb_{\lambda}\rightarrow x$ and, by item 2. in Proposition \ref{product}. that
		$$xb_{\lambda}\in L_{\alpha}fU_{\lambda}=L_{\alpha}fL_{\alpha}g_{U_{\lambda}}\subset L(f*_Lg_{U_{\lambda}})$$
		implying that $x\in \liminf_UL_{\alpha}(f*g_{U})$ and hence $f_U\xrightarrow{L} f$.			
	\end{proof}
						
\subsection{$E$-robust functions on topological groups}						

In this section we show that on topological groups it is also true that any function in $\FC(G)$ is the $E$-limit of some net $(f_{\lambda})_{\lambda\in\Lambda}\in\RC_E(G)$.						

Let us consider $G\times\R$ as a topological group, with the product given by 
$$(x_1, \alpha_1)(x_2, \alpha_2):=(x_1\cdot x_2, \alpha_1+\alpha_2).$$

Let $f, g\in\FC(G)$. The {\it E-convolution} of $f$ and $g$ is the function $f*_Eg\in\FC(G)$ given by 
$$f*_Eg(x):=\inf_{y\in G}\{f(xy^{-1})+g(y)\}.$$

\begin{lemma}
	\label{product2}
	For all $f, g \in\FC(G)$ and $\alpha>0$ it holds that
	\begin{itemize}
		\item[1.] $e(f*_Eg)=e(f)e(g)$;
		\item[2.] $ E(f)E(g)\subset E(f*_Eg)\subset \overline{E(f) E(g)}.$
		\item[3.] $e(f)$ and $E(f)$ are invariant for translations by elements in $\{e\}\times \R$.
	\end{itemize}
	
\end{lemma}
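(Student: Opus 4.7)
The plan is to unwrap the definitions of the $E$-convolution and of the epigraphs $e(\,\cdot\,)$, $E(\,\cdot\,)$; the only nontrivial step is an $\varepsilon$-perturbation argument in item~2 that reconciles the strict inequality defining $f*_Eg$ with the non-strict inequality defining $E(f*_Eg)$.

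For item~1 I would verify both inclusions of $e(f*_Eg)=e(f)e(g)$ directly. If $(z,\gamma)\in e(f)e(g)$, write $z=x_1x_2$ and $\gamma=\alpha_1+\alpha_2$ with $f(x_1)<\alpha_1$ and $g(x_2)<\alpha_2$; evaluating the infimum defining $f*_Eg(z)$ at $y=x_2$ gives $f*_Eg(z)\leq f(x_1)+g(x_2)<\gamma$, so $(z,\gamma)\in e(f*_Eg)$. Conversely, if $(z,\gamma)\in e(f*_Eg)$ I would pick $y\in G$ with $f(zy^{-1})+g(y)<\gamma$; setting $x_1:=zy^{-1}$, $x_2:=y$, and $s:=\gamma-f(x_1)-g(x_2)>0$, the choice $\alpha_1:=f(x_1)+s/2$, $\alpha_2:=g(x_2)+s/2$ yields strictly positive numbers with $\alpha_1>f(x_1)$, $\alpha_2>g(x_2)$, and $\alpha_1+\alpha_2=\gamma$, so $(x_1,\alpha_1)(x_2,\alpha_2)=(z,\gamma)\in e(f)e(g)$.

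For item~2 the inclusion $E(f)E(g)\subset E(f*_Eg)$ is immediate from the subadditive bound $f*_Eg(x_1x_2)\leq f(x_1)+g(x_2)$, obtained by taking $y=x_2$ in the defining infimum. For the reverse inclusion, given $(z,\gamma)\in E(f*_Eg)$ one has $\inf_y[f(zy^{-1})+g(y)]\leq\gamma$, so for each $\varepsilon>0$ there is $y_\varepsilon$ with $f(zy_\varepsilon^{-1})+g(y_\varepsilon)<\gamma+\varepsilon$; by item~1 this places $(z,\gamma+\varepsilon)\in e(f)e(g)\subset E(f)E(g)$, and letting $\varepsilon\searrow 0$ yields $(z,\gamma)\in\overline{E(f)E(g)}$.

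For item~3, right multiplication by $(e,\beta)$ sends $(x,\alpha)$ to $(x,\alpha+\beta)$, so the $G$-coordinate is preserved and the statement reduces to monotonicity of the height: $f(x)<\alpha$ forces $f(x)<\alpha+\beta$ for every $\beta\geq 0$, giving $e(f)(e,\beta)\subset e(f)$, and the same argument with $\leq$ handles $E(f)(e,\beta)\subset E(f)$. The main obstacle is really the $\varepsilon$-approximation in item~2; everything else is routine bookkeeping with the definitions.
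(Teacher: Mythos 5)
Your proposal is correct and follows essentially the same route as the paper: evaluate the infimum at $y=x_2$ for the easy inclusions, extract a near-minimizing $y$ and split the positive slack to decompose a point of $e(f*_Eg)$, and use the $\varepsilon$-shift $(z,\gamma+\varepsilon)$ to land in the closure for item~2 (your appeal to item~1 there is a minor streamlining of the paper's explicit re-decomposition). You also supply an argument for item~3 --- correctly restricted to nonnegative shifts $\beta\geq 0$, which is all that can hold and all that is later used --- whereas the paper's proof omits that item entirely.
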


\begin{proof} 1. Let $(x, \alpha)\in e(f*g)$. In particular, if $\varepsilon>0$ is such that $f*_Eg(x)<\alpha-\varepsilon$ there exists $y\in G$ and such that 
	$$f(xy^{-1})+g(y)+\varepsilon<\alpha.$$
	Then, $(x_1, \alpha_1)=(xy^{-1}, f(xy^{-1})+\varepsilon)$ and $(x_2, \alpha_2)=(y, \alpha-f(xy^{-1})-\varepsilon)$ are such that
	$$(x_1, \alpha_1)\in e(g)\;\;\mbox{ and }\;\; (x_2, \alpha_2)\in e(g)$$
	and 
	$$(x_1, \alpha_1)(x_2, \alpha_2)=(x_1x_2, \alpha_1+\alpha_2)=(x, \alpha)\;\;\implies \;\;e(f*_Eg)\subset e(f)e(g).$$
	
	Reciprocally, if $(x_1, \alpha_1)\in e(f)$ and $(x_2, \alpha_2)\in e(g)$. Then $f(x_1)< \alpha_1$ and $g(x_2)<\alpha_2$ gives us
	$$\alpha_1+\alpha_2> f(x_1)+g(x_2)=f\left((x_1x_2)x^{-1}_2\right)+g(x_2)\geq \inf_{y\in G}\{f\left((x_1x_2)y^{-1}\right)+g(y)\}=(f*_Eg)(x_1x_2)$$
	implying that $(x_1, \alpha_1)(x_2, \alpha_2)=(x_1x_2, \alpha_1+\alpha_2)\in e(f*g)$ and hence 
	$$e(f)e(g)\subset e(f*_Eg).$$

	2. The inclusion $E(f)E(g)\subset E(f*_Eg)$ is analogous to the inclusion $e(f)e(g)\subset e(f*_Eg)$. 	Consider then $(x, \alpha)\in E(f*g)$. By definition, for any $\varepsilon>0$ there exists $y\in G$ such that 
	$$f(xy^{-1})+g(y)<\alpha+\varepsilon.$$
	By defining $(x_1, \alpha_1)=(xy^{-1}, f(xy^{-1}))$ and $(x_2, \alpha_2)=(y, \alpha+\varepsilon-f(xy^{-1}))$ we get that 
	$$(x_1, \alpha_1)\in E(f)\;\;\mbox{ and }\;\;(x_2, \alpha_2)\in E(g),$$
	furthermore
	$$(x_1, \alpha_1)(x_2, \alpha_2)=(x_1x_2, \alpha_1+\alpha_2)=(x, \alpha+\varepsilon)\;\;\implies\;\; (x, \alpha+\varepsilon)\in E(f)E(g),$$
	implying that 
	$$(x, \alpha)\in \overline{E(f)E(g)}\;\;\;\mbox{ and hence }\;\; E(f*g)\subset\overline{E(f)E(g)},$$
	which finishes the proof.
\end{proof}

The next result shows that any function in $f\in\FC(G)$ can is the $E$-limit of a net in $\RC_E(G)$.

\begin{theorem}
	For any $f\in\FC(G)$ there exists a net $(f_{\lambda})_{\lambda\in\Lambda}\subset\RC_E(G)$ such that 
	$$f_{\lambda}\xrightarrow{E}f.$$
\end{theorem}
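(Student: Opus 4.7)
The plan is to mimic the $L$-convergence theorem from the previous subsection, replacing the $L$-convolution with the $E$-convolution. For each $U \in \VC_e$ I set $g_U(x) := 0$ if $x \in U$ and $g_U(x) := +\infty$ otherwise, and define $f_U := f *_E g_U$, viewing $\VC_e$ as a directed set under reverse inclusion. This will be cleaner than the $L$-case because the epigraph $e(g_U) = U \times (0, +\infty)$ is already open in $G \times \R$, so no modification of $f_U$ on the set of generalized minima will be required.

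First, I would verify $f_U \in \RC_E(G)$. By Lemma \ref{product2}(1), $e(f_U) = e(f)\, e(g_U)$. Since $G \times \R$ is a Hausdorff topological group under $(x_1, \alpha_1)(x_2, \alpha_2) = (x_1 x_2, \alpha_1 + \alpha_2)$ and $e(g_U)$ is open there, Proposition \ref{proposition}(1) applied to $G \times \R$ shows that $e(f_U)$ is open, hence trivially robust. Taking $y = e$ in the convolution also yields $f_U \leq f$, which guarantees $f_U \in \FC(G)$.

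Next I would show $f_U \xrightarrow{E} f$, i.e.\ $\limsup_U E(f_U) \subseteq \overline{E(f)} \subseteq \liminf_U E(f_U)$. The $\liminf$ inclusion follows from $f_U \leq f$: this gives $E(f) \subseteq E(f_U)$ for each $U$, hence $E(f) \subseteq \liminf_U E(f_U)$, and closedness of the $\liminf$ yields the inclusion after taking closures. For the $\limsup$, I would first note $\limsup_U E(f_U) = \limsup_U e(f_U)$ because $e(f_U) \subseteq E(f_U) \subseteq \overline{e(f_U)}$. Given $(x, \alpha)$ in that common $\limsup$, a subnet produces $(x_\mu, \alpha_\mu) \in e(f_{U_\mu}) = e(f)\, e(g_{U_\mu})$ with $(x_\mu, \alpha_\mu) \to (x, \alpha)$. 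Decomposing $(x_\mu, \alpha_\mu) = (a_\mu, \gamma_\mu)(b_\mu, \delta_\mu)$ with $(a_\mu, \gamma_\mu) \in e(f)$ and $b_\mu \in U_\mu$, $\delta_\mu > 0$, one obtains $b_\mu \to e$, so $a_\mu \to x$, and after extracting a further subnet $\gamma_\mu \to \gamma \in [0, \alpha]$.

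The main obstacle will be that $\gamma$ need not equal $\alpha$: the vertical component of the decomposition is free within $(0, \alpha_\mu)$, so all one gets directly is $(x, \gamma) \in \overline{E(f)}$ for some $\gamma \leq \alpha$. I would resolve this using the upward invariance of $E(f)$ in the $\alpha$-coordinate: the shifted net $(a_\mu, \gamma_\mu + (\alpha - \gamma))$ still lies in $E(f)$, since $f(a_\mu) \leq \gamma_\mu \leq \gamma_\mu + (\alpha - \gamma)$ and the second coordinate is eventually positive, and it converges to $(x, \alpha)$, placing $(x, \alpha) \in \overline{E(f)}$. This is the only place where the additive group structure on $\R$ is genuinely used, and it is precisely what eliminates the $\MC(f)$-correction that was needed in the $L$-case.
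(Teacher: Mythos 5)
Your proof is correct and follows essentially the same route as the paper's: convolve $f$ with the indicator functions $g_U$ of neighborhoods of the identity, use Lemma \ref{product2} and Proposition \ref{proposition} to get openness (hence robustness) of $e(f*_Eg_U)$, decompose points of the product epigraph for the $\limsup$ inclusion, and absorb the leftover vertical slack via the upward invariance of $E(f)$ in the $\R$-coordinate. Your only deviations are small simplifications --- deriving the $\liminf$ inclusion directly from the pointwise bound $f_U\le f$, and working with the exact identity $e(f_U)=e(f)\,e(g_U)$ instead of $E(f_U)\subset\overline{E(f)E(g_U)}$ --- which streamline but do not change the argument.
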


\begin{proof}
	For any $U\in\VC_e$ let us consider indicator function of $U$ given by
	$$g_U(x):=\left\{\begin{array}{ll}
	0, & \mbox{ if }x\in U\\
	+\infty & \mbox{ if }x\notin U
	\end{array} \right.$$
Then, $e(g_U)=U\times(0, +\infty)$ and hence $g_U\in\RC(G)$. Moreover, by Lemma \ref{product2} it holds that 
$$e(f*g_U)=e(f) e(g_U)=e(f)(U\times\R^+),$$
and so, Proposition \ref{proposition} assures that $e(f*g_U)$ is open and in particular robust.
We claim that $f_U\xrightarrow{E}f$, or equivalently,
$$\limsup_{U} E(f_U)\subset \overline{E(f)}\subset\liminf_UE(f_U).$$	
Let us consider $(x, \alpha)\in \limsup_{U} E(f_U)$. By definition, 
$$\exists \{U_{\lambda}\}\subset\VC_e\;\;\mbox{ and }\;\;(x_{\lambda}, \alpha_{\lambda})\in E(f_{U_{\lambda}}) \;\;\mbox{ such that }\;\; \bigcap_{\lambda}U_{\lambda}=\{e\}\;\;\mbox{ and }\;\;(x_{\lambda}, \alpha_{\lambda})\rightarrow (x, \alpha).$$	
Moreover, from Lemma \ref{product2} it holds that $E(f_U)\subset \overline{E(f)E(g_U)}$, we can assume w.l.o.g. that 
$$(x_{\lambda}, \alpha_{\lambda})\in E(f)E(g_{U_{\lambda}}),$$
so we can write 
$$(x_{\lambda}, \alpha_{\lambda})=(a_{\lambda}, \gamma_{\lambda})(b_{\lambda}, \beta_{\lambda}), \;\;\mbox{ with }\;\; (a_{\lambda}, \gamma_{\lambda})\in E(f), \;\mbox{ and }\; (b_{\lambda}, \beta_{\lambda})\in E(g_{U_{\lambda}}).$$
However,
$$(b_{\lambda}, \beta_{\lambda})\in e(g_{U_{\lambda}})=U_{\lambda}\times\R^+\;\;\implies\;\; b_{\lambda}\rightarrow e\;\;\implies\;\; a_{\lambda}=x_{\lambda}b_{\lambda}^{-1}.$$
On the other hand, the fact that $\alpha_{\lambda}=\gamma_{\lambda}+\beta_{\lambda}$ implies in particular that both $(\gamma_{\lambda})_{\lambda}$ and $(\beta_{\lambda})_{\lambda}$ are bounded nets in $\R$. By taking subnets if necessary we are able to assume w.l.o.g. that $\gamma_{\lambda}\rightarrow \gamma\;\;\mbox{ and }\;\;\beta_{\lambda}\rightarrow\beta$ implying that 
$$(a_{\lambda}, \gamma_{\lambda})\rightarrow (x, \gamma)\;\;\mbox{ and hence }\;\;(x, \gamma)\in \overline{E(f)}.$$
Again, from the fact that right translations are homeomorphisms in $G\times \R$ implies that 
$$(x, \alpha)=(a, \gamma+\beta)=(a, \gamma)(e, \beta)\in \overline{E(f)}(e, \beta)=\overline{E(f)(e, \beta)}\subset \overline{E(f)}$$
and hence $\limsup_{U} E(f_U)\subset \overline{E(f)}$.

Let us now consider $(x, \alpha)\in E(f)$ and a family of neighborhood $\{U_{\lambda}\}$ such that $\bigcap_{\lambda}U_{\lambda}=\{e\}$. By considering $b_{\lambda}\in U_{\lambda}$ and choosing $\alpha_{\lambda}\in(0, +\infty)$ such that $\alpha_{\lambda}\rightarrow 0$ we have that 
$$(x, \alpha)(b_{\lambda}, \alpha_{\lambda})\in E(f)(U_{\lambda}\times(0, +\infty))=E(f)E(g_{U_{\lambda}})\subset E(f*_Eg_{\lambda}).$$
On the other hand, 
$$(x, \alpha)(b_{\lambda}, \alpha_{\lambda})=(xb{\lambda}, \alpha+\alpha_{\lambda})\rightarrow (x, \alpha)$$
showing that $(x, \alpha)\in\liminf_{U}E(f*_Eg_U)$ and hence that $E(f)\subset \liminf_{U}E(f*_Eg_U)$ which implies necessarily that $f_U\xrightarrow{E} f$. 
\end{proof}

\pagebreak

\subsection{Figures}

\begin{figure}[h!]
	\begin{center}
		\includegraphics[scale=0.2, angle=270]{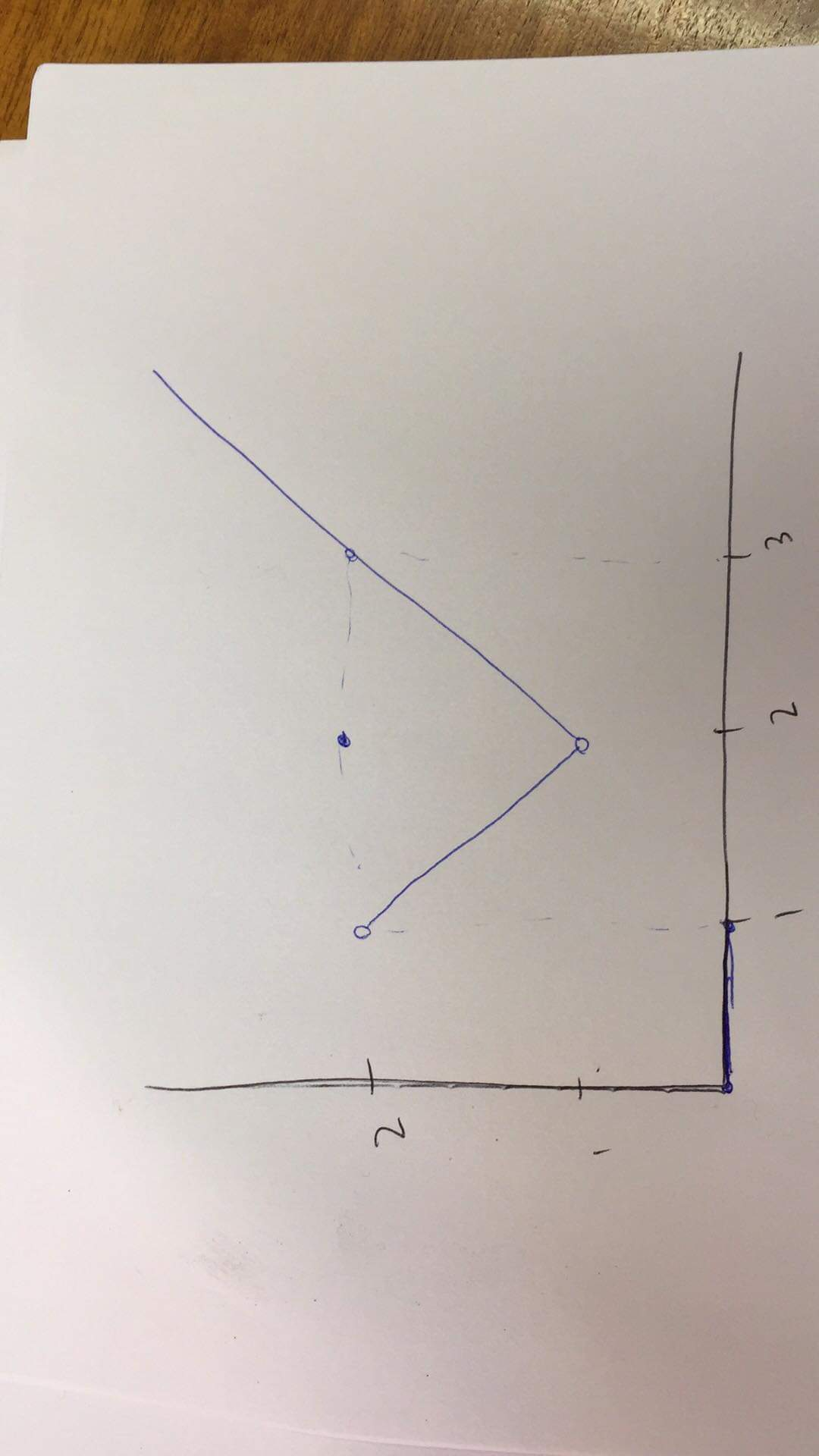}
	\end{center}
	\caption{Function with generalized minimum and without minimum.}
	\label{fig1}
\end{figure}

\begin{figure}[h!]
	\begin{center}
		\includegraphics[scale=0.2, angle=90]{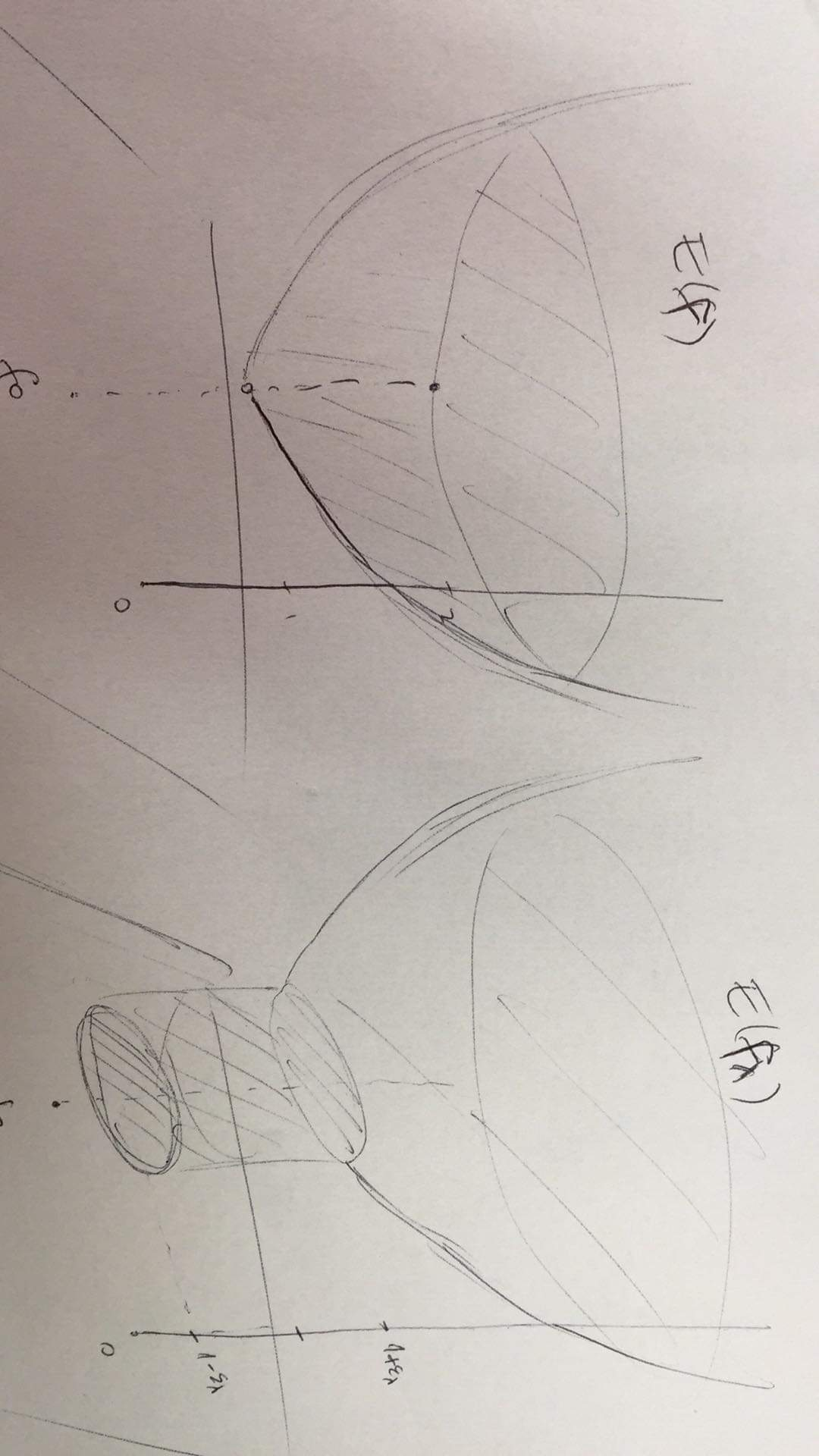}
	\end{center}
	\caption{Net that $E$-converges but not $L$-converges.}
	\label{fig2}
\end{figure}

\end{document}